\documentclass[a4paper,11pt,reqno,twoside]{article}
\usepackage{amssymb}
\usepackage[frenchb,english]{babel}
\usepackage{amscd}
\usepackage{amsmath,amsthm,amsfonts,amssymb,graphicx}
\usepackage[colorlinks,linkcolor=blue,anchorcolor=blue,citecolor=green]{hyperref}
\usepackage{mathrsfs}
\usepackage{fancyhdr}
\usepackage{subfigure}
\usepackage{bm}
\usepackage{multicol}
\usepackage{picins}
\usepackage{abstract}

\thispagestyle{empty} \pagestyle{myheadings}
\markboth{\centerline{\rm E. Bi,\; Z. Feng,\; G. Su,\; \& \; Z. Tu}}{\centerline{\rm
Rawnsley's $\varepsilon$-function }}

\thispagestyle{empty} \pagestyle{myheadings}


\addtolength{\topmargin}{-54pt}
\setlength{\oddsidemargin}{-0.9cm}  
\setlength{\evensidemargin}{\oddsidemargin}
\setlength{\textwidth}{17.00cm}
\setlength{\textheight}{24.00cm}    

\parindent 11pt

\makeatother

\begin{document}
\theoremstyle{plain}
\newtheorem{Definition}{Definition}[section]
\newtheorem{Proposition}{Proposition}[section]
\newtheorem{Property}{Property}[section]
\newtheorem{Theorem}{Theorem}[section]
\newtheorem{Lemma}[Theorem]{\hspace{0em}\bf{Lemma}}
\newtheorem{Corollary}[Theorem]{Corollary}
\newtheorem{Remark}{Remark}[section]
\newtheorem{Example}{Example}[section]

\setlength{\oddsidemargin}{ 1cm}  
\setlength{\evensidemargin}{\oddsidemargin}
\setlength{\textwidth}{13.50cm}
\vspace{-.8cm}

\noindent  {\LARGE Rawnsley's $\varepsilon$-function on some Hartogs type domains over bounded
symmetric domains and its applications }\\\\

\noindent\text{Enchao Bi$^{1}$$^{*}$,\; Zhiming Feng$^2$,\; Guicong S{u$^3$},\;\;\&\; Zhenhan T{u$^3$} }\\

\noindent\small {${}^1$School of Mathematics and Statistics, Qingdao
University, Qingdao, Shandong 266071, P.R. China} \\

\noindent\small {${}^2$School of Mathematical and Information Sciences, Leshan Normal University, Leshan, Sichuan 614000, P.R. China } \\

\noindent\small {${}^3$School of Mathematics and Statistics, Wuhan
University, Wuhan, Hubei 430072, P.R. China} \\

\renewcommand{\thefootnote}{{}}
\footnote{\hskip -16pt {$^{*}$Corresponding author, email: bienchao@whu.edu.cn \\}}
\\



\normalsize \noindent\textbf{Abstract}\quad {
The purpose of this paper is twofold. Firstly, we will compute the explicit expression of the Rawnsley's $\varepsilon$-function $\varepsilon_{(\alpha,g(\mu;\nu))}$ of $\big(\big(\prod_{j=1}^k\Omega_j\big)^{{\mathbb{B}}^{d_0}}(\mu),g(\mu;\nu)\big)$, where $g(\mu;\nu)$ is a K\"{a}hler metric associated with the K\"{a}hler potential $-\sum_{j=1}^k\nu_j\ln N_{\Omega_j}(z_j,\overline{z_j})^{\mu_j}-\ln(\prod_{j=1}^kN_{\Omega_j}(z_j,\overline{z_j})^{\mu_j}-\|w\|^2)$ on the generalized Cartan-Hartogs domain $\big(\prod_{j=1}^k\Omega_j\big)^{{\mathbb{B}}^{d_0}}(\mu)$ and obtain necessary and sufficient conditions for $\varepsilon_{(\alpha,g(\mu;\nu))}$ to become a
polynomial in $1-\|\widetilde{w}\|^2$ (see the definition \eqref{00000} for $\widetilde{w}$). Secondly, we study the Berezin quantization on $\big(\prod_{j=1}^k\Omega_j\big)^{{\mathbb{B}}^{d_0}}(\mu)$ with the metric $ g(\mu;\nu)$.

\vskip 10pt

\noindent \textbf{Key words:}  Bergman
kernels \textperiodcentered \; Bounded symmetric domains
\textperiodcentered \; Cartan-Hartogs domains \textperiodcentered \;
K\"{a}hler metrics \textperiodcentered \; Berezin quantization

\vskip 10pt

\noindent \textbf{Mathematics Subject Classification (2010):} 32A25
  \textperiodcentered \, 32M15  \textperiodcentered \, 32Q15

\setlength{\oddsidemargin}{-.5cm}  
\setlength{\evensidemargin}{\oddsidemargin}
\pagenumbering{arabic}
\renewcommand{\theequation}
{\arabic{section}.\arabic{equation}}
\vskip 5pt
\setcounter{equation}{0}
\section{{Introduction}}
Recently, Berezin quantization has received a lot of attention (e.g., see Cahen-Gutt-Rawnsley \cite{CGR},  Engli\v{s} \cite{E0}, Loi-Mossa \cite{Loi-Mossa} and Zedda \cite{Zedda}). Roughly, a quantization is a construction of a quantum system from the classical mechanics of a system. In 1927, Weyl made an attempt at a quantization known as Weyl quantization. His original ideal is associating a self-adjoint operators on a separable Hilbert space with functions on a symplectic manifold and some certain commutations are fulfilled. Later on, Berezin \cite{Berezin} raised a new quantization procedure, i.e., Berezin quantization. A Berezin quantization on a K\"{a}hler manifold $(\Omega,\omega)$  is given by a family of associative algebra $\mathcal{A}_{h}$ where the parameter $h$ runs through a set E of the positive reals with $0$ in its closure and moreover there exist a subalgebra $\mathcal{A}$ of $\bigoplus\{\mathcal{A}_{h};\;h\in E\}$ such that some properties are satisfied (see Berezin \cite{Berezin} for details). More precisely, we call an associative algebra with involution $\mathcal{A}$ a quantization of $(\Omega,\omega)$ if the following properties are satisfied.

$(1)$ There exist a family of associative algebras $\mathcal{A}_{h}$ of functions on $\Omega$ where the parameter $h$ runs through a set E of the positive reals with $0$ in its closure. Moreover $\mathcal{A}$ is a subalgebra of $\bigoplus\{\mathcal{A}_{h};\;h\in E\}$.

$(2)$ For each $f\in \mathcal{A}$ which will be written $f(h,x)$ ($h\in E$, $x\in \Omega$ ) such that $f(h,\cdot)\in \mathcal{A}_{h}$, the limit $$\lim_{h\rightarrow0+}f(h,x)=\varphi(f)(x)$$ exists.

$(3)$ $\varphi(f*g)=\varphi(f)\cdot\varphi(g)$, $\varphi(h^{-1}(f*g-g*f))=\frac{1}{i}\{\varphi(f),\varphi(g)\}$ for $f,g\in\mathcal{A}$. Here $*$ and $\{,\}$ denote the product of $\mathcal{A}$ and the Poisson bracket.

$(4)$ For any two points $x_{1},x_{2}\in\Omega$, there exists $f\in \mathcal{A}$ such that $\varphi(f)(x_{1})\neq \varphi(f)(x_{2})$.

Suppose $D$ is a bounded domain in $\mathbb{C}^n$ and $\varphi$
is a strictly plurisubharmonic function on $D$. Let $g$ be a
K\"{a}hler  metric  on $D$ associated with the K\"{a}hler form
$\omega=\frac{\sqrt{-1}}{2\pi}\partial\overline{\partial}\varphi$.
For $\alpha>0$, let $\mathcal{H}_{\alpha}$ be the weighted Hilbert
space of square integrable holomorphic functions on $(D, g)$ with
the weight $\exp\{-\alpha \varphi\}$, that is,
$$\mathcal{H}_{\alpha}:=\left\{ f\in \textmd{Hol}(D): \int_{D}|f|^2\exp\{-\alpha \varphi\}\frac{\omega^n}{n!}<+\infty\right\},$$
where $\textmd{Hol}(D)$ denotes the space of holomorphic functions
on $D$. Let $K_{\alpha}(z,\overline{z})$ be the Bergman kernel
(namely, the reproducing kernel) of the Hilbert space
$\mathcal{H}_{\alpha}$ if $\mathcal{H}_{\alpha}\neq \{0\}$. The
Rawnsley's $\varepsilon$-function (see \cite{CGR}) on $D$ associated with the metric
$g$  is defined by
\begin{equation}\label{eq1.4}
 \varepsilon_{(\alpha,g)}(z):=\exp\{-\alpha \varphi(z)\}K_{\alpha}(z,\overline{z}),\;\; z\in D.
\end{equation}

Note the Rawnsley's $\varepsilon$-function depends only on the
metric $g$ and not on the choice of the K\"{a}hler potential
$\varphi$. The asymptotics of the Rawnsley's $\varepsilon$-function $
\varepsilon_{\alpha}$ was expressed in terms of the parameter
$\alpha$ for compact manifolds by Catlin \cite{Cat} and Zelditch
\cite{Zeld} (for $\alpha\in \mathbb{N}$) and for non-compact
manifolds by Ma-Marinescu \cite{MM07,MM08, MM12}. In some particular case
it was also proved by Engli\v{s} \cite{E1,E2}. Especially, when the function $ \varepsilon_{(\alpha,g)}(z)$
is a positive constant on $D$, the metric $\alpha g$ is called balanced.

In order to establish a quantization procedure on a noncompact manifold $(D,g)$, we firstly give the following two conditions (refer to \cite{E0} and \cite{Zedda}):\\
\\
($\uppercase\expandafter{\romannumeral1}$). The function $\exp\{-D_{g}(z,u)\}$ is globally defined on $D\times D$,  $\exp\{-D_{g}(z,u)\}\leq1$ and $\exp\{-D_{g}(z,u)\}$ $=1$ if and only if $z=u$, here $D_{g}(z,u)$ denotes the Calabi's diastasis function (see Calabi \cite{calabi}) which is defined by
$$D_{g}(z,u):=\varphi(z,\overline{z})+\varphi(u,\overline{u})-\varphi(z,\overline{u})-\varphi(u,\overline{z}).$$
($\uppercase\expandafter{\romannumeral2}$). The function $a(z,\overline{z})$ admits a sesquianalytic extension on $D\times D$, that is,
$$a(z,\overline{u})=\exp\{-\varphi(z,\overline{u})\}$$
and moreover there exists a infinite set $E$ of integers such that for all $\alpha\in E$, $z,\;u \in D$,
$$\varepsilon_{(\alpha, g)}(z,\overline{u})=\exp\{-\alpha \varphi(z)\}K_{\alpha}(z,\overline{z})=\alpha^{n}+B(z,\overline{u})\alpha^{n-1}+C(\alpha,z,\overline{u})\alpha^{n-2},$$
where $B(z,\overline{u})$ and $C(\alpha,z,\overline{u})$ are sesquianalytic functions in $z$, $\overline{u}$ which satisfy
$$\sup\limits_{z,u\in D}\vert B(z,\overline{u})\vert<+\infty,\;\;\sup\limits_{z,u\in D,\alpha\in E}
\vert C(\alpha,z,\overline{u}) \vert<+\infty.$$

If $\alpha g$ are balanced metrics on $D$ for $\alpha\in E$, by definition $(D,g)$ automatically satisfies the condition $(\uppercase\expandafter{\romannumeral2}).$ When the condition $(\uppercase\expandafter{\romannumeral1})$ is satisfied and $\alpha g$ are balanced metrics for $\alpha\in E$ on $D$, Berezin \cite{Berezin} was able to establish a quantization procedure on $(\Omega,g)$. In 1996, Engli\v{s} \cite{E0} extended the Berezin quantization
to the case when the above conditions $(\uppercase\expandafter{\romannumeral1})$ and $(\uppercase\expandafter{\romannumeral2})$  are satisfied.
In 2012, Loi-Mossa \cite{Loi-Mossa} proved that the above conditions $(\uppercase\expandafter{\romannumeral1})$ and $(\uppercase\expandafter{\romannumeral2})$
 are satisfied by any homogeneous bounded domain $\Omega$ equipped with a homogeneous K\"{a}hler  metric $g$ and thus the homogeneous bounded domain $(D,g)$ must admit a Berezin quantization also.

 Later on, Loi-Mossa (see \cite{Loi-Mossa 2015} Theorem 1.2) also gave necessary and sufficient condition for a homogeneous K\"{a}hler manifold to admit a Berezin quantization. They also prove that a contractible homogeneous K\"{a}hler manifold (i.e., all the products $(
\Omega,g)\times(\mathbb{C}^m; g_0)$ where $(\Omega
, g)$ is an homogeneous bounded domain and
$g_0$ is the standard
flat metric) admits a Berezin quantization. However, except for the above cases, the known instances when the above conditions $(\uppercase\expandafter{\romannumeral1})$ and $(\uppercase\expandafter{\romannumeral2})$
are satisfied are very few (see Engli\v{s} \cite{E0} and Loi-Mossa \cite{Loi-Mossa}). So it is interesting to find more complete noncompact K\"{a}hler manifolds which a quantization can be carried out.
\vskip 5pt

Let $\Omega_i\subseteq \mathbb{C}^{d_i}$ be an irreducible bounded
symmetric domain $(1\leq i\leq k)$. For given positive integer
$d_0$,  positive real numbers $\mu_i$ $(1\leq i\leq k)$, the
generalized Cartan-Hartogs domain $ \big(\prod_{j=1}^k\Omega_j
\big)^{{\mathbb{B}}^{d_0}}(\mu)$ is defined by
\begin{equation}\label{eq1.1}
 \big(\prod_{j=1}^k\Omega_j\big)^{\mathbb{B}^{d_0}}(\mu):=\left\{(z,w)\in \prod_{j=1}^k \Omega_j\times \mathbb{B}^{d_0}
  : \|w\|^2<\prod_{j=1}^kN_{\Omega_j}(z_j,\overline{z_j})^{\mu_j} \right\},
\end{equation}
where $\mu=(\mu_1,\ldots,\mu_k) \in \mathbb{(R_+)}^k,\;
z=(z_1,\ldots,z_k)\in
\mathbb{C}^{d_1}\times\cdots\times\mathbb{C}^{d_k}$, $\|\cdot\|$ is
the standard Hermitian norm in $\mathbb{C}^{d_0}$,
$N_{\Omega_j}(z_j,\overline{z_j})$ is the generic norm of $\Omega_j$
$(1\leq i\leq k)$ and $\mathbb{B}^{d_0}:=\{w\in
\mathbb{C}^{d_0}:\|w\|^2<1\}$. For the reference of the generalized
Cartan-Hartogs domains, see Feng-Tu \cite{FT2}, Tu-Wang \cite{TW}
and Wang-Hao \cite{WH}.

\vskip 5pt

Let $\nu=(\nu_1,\nu_2,\ldots,\nu_k)$ with $\nu_j>-1$ for $1\leq j\leq k$. For the generalized Cartan-Hartogs domain $ \big(\prod_{j=1}^k\Omega_j
\big)^{{\mathbb{B}}^{d_0}}(\mu)$, define
\begin{equation}\label{eq1.2}
 \Phi(z,w):=-\sum_{j=1}^k\nu_j\ln N_{\Omega_j}(z_j,\overline{z_j})^{\mu_j} -\ln\left(\prod_{j=1}^kN_{\Omega_j}(z_j,\overline{z_j})^{\mu_j}-\|w\|^2\right).
\end{equation}
The K\"{a}hler form $\omega(\mu;\nu)$ on
$\big(\prod_{j=1}^k\Omega_j\big)^{{\mathbb{B}}^{d_0}}(\mu)$ is
defined by
\begin{equation}\label{eq1.3}
 \omega(\mu;\nu):=\frac{\sqrt{-1}}{2\pi}\partial
\overline{\partial}\Phi.
\end{equation}
 The metric  $g(\mu;\nu)$ on  $\big(\prod_{j=1}^k\Omega_j\big)^{{\mathbb{B}}^{d_0}}(\mu)$
associated with  $\omega(\mu;\nu)$ is given by
$$ds^2=\sum_{i,j=1}^{n}\frac{\partial^2\Phi}{\partial Z_i \partial
\overline{Z_j}}dZ_i\otimes d\overline{Z_j},$$
where
$$n=\sum_{j=0}^kd_j,\;\;  Z=(Z_1,\ldots,Z_n):=(z,w).$$
If $\nu=0$, then  the metric  $g(\mu; \nu)$ becomes the standard canonical metric (e.g., see Bi-Tu \cite{Bi}, Feng-Tu \cite{FT, FT2}, Loi-Zedda \cite{LZ} and Zedda \cite{Zed, Zedda}). In this paper, we will focus our attention on the  metric  $g(\mu;\nu)$ on  $\big(\prod_{j=1}^k\Omega_j\big)^{{\mathbb{B}}^{d_0}}(\mu)$. In the following, we also present some new notations which will not be explicated in this section. Please refer to the Section $2$ for details.

\vskip 5pt
For the generalized Cartan-Hartogs domain $ \big(
\big(\prod_{j=1}^k\Omega_j \big)^{{\mathbb{B}}^{d_0}}(\mu), g(\mu;\nu)
\big)$ in the case of $\nu=0$, Feng-Tu \cite{FT2} proved that the Rawnsley's
$\varepsilon$-function admits the following expansion :
\begin{Theorem}[Feng-Tu \cite{FT2}]
Let $\Omega_i$ be an irreducible bounded symmetric domain  in
$\mathbb{C}^{d_i}$, and denote the
generic norm $N_{\Omega_i}(z_i,\overline{z_i})$, the dimension $d_i$
and the genus $p_i$ for $\Omega_i$ $(1\leq i \leq k)$. Set
$n=\sum_{j=0}^kd_j$, $d=\sum_{j=1}^kd_j$ and
$\alpha>\max\left\{n,\frac{p_1-1}{\mu_1(1+\nu_1)},\ldots,\frac{p_k-1}{\mu_k(1+\nu_k)}\right\}$.
 Then the Rawnsley's $\varepsilon$-function associated with
$\left(\big(\prod_{j=1}^k\Omega_j\big)^{{\mathbb{B}}^{d_0}}(\mu),g(\mu;0)\right)$
can be written as
\begin{equation}\label{eq1.5}
  \varepsilon_{(\alpha,g(\mu;0))}(z,w)=\frac{1}{\prod_{i=1}^{k}\mu_{i}^{d_{i}}}\sum_{j=0}^{d}\frac{D^{j}\widetilde{\chi}(d)}{j!}(1-
  {\Vert\widetilde{w}\Vert}^2)^{d-j}(\alpha-n)_{j+d_{0}},
\end{equation}
where
\begin{equation}\label{00000}
\widetilde{w}:=\frac{w}{\prod_{j=1}^kN_{\Omega_j}(z_j,\overline{z_j})^{\frac{\mu_j}{2}}}.
\end{equation}
\end{Theorem}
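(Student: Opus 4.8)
\medskip
\noindent\textbf{Proof proposal.}\quad The plan is to compute the weighted Bergman kernel $K_{\alpha}(z,w;\overline{z},\overline{w})$ of $\mathcal{H}_{\alpha}$ explicitly and then multiply by $\exp\{-\alpha\Phi\}$. Since $\nu=0$, on writing $N(z):=\prod_{j=1}^{k}N_{\Omega_{j}}(z_{j},\overline{z_{j}})^{\mu_{j}}$ we have $\Phi=-\ln(N(z)-\|w\|^{2})$ and hence $\exp\{-\alpha\Phi\}=(N(z)-\|w\|^{2})^{\alpha}$. First I would record the volume form $\omega(\mu;0)^{n}/n!$, that is, the determinant of the complex Hessian of $\Phi$. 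For a potential of the form $-\ln(N(z)-\|w\|^{2})$ this determinant factorizes into a fibre factor that is a power of $N(z)-\|w\|^{2}$ and a base factor built from the Bergman-type Hessians of the individual $\Omega_{j}$; the latter is the source of the genera $p_{i}$ and of the prefactor $1/\prod_{i}\mu_{i}^{d_{i}}$.

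\medskip\noindent The main structural step is to separate variables, using the Hartogs structure together with the $U(d_{0})$-symmetry in $w$ and the homogeneity of each $\Omega_{j}$. Expanding $f\in\mathcal{H}_{\alpha}$ as $f(z,w)=\sum_{m\in\mathbb{N}^{d_{0}}}c_{m}(z)\,w^{m}$ and folding the fibre part of the volume form into the weight, one sees that the weight depends on $w$ only through $\|w\|^{2}$, so distinct monomials $w^{m}$ are mutually orthogonal and each norm splits into a fibre integral in $w$ times a base integral in $z$. The diagonal kernel then becomes a sum over $m$ of $|w^{m}|^{2}$, divided by the fibre norms, times the diagonal value of the weighted Bergman kernel of $\prod_{j}\Omega_{j}$ whose weight is a power of $N$ fixed by $|m|$ and $\alpha$.

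\medskip\noindent I would then evaluate the two integral families. The fibre integral over $\{\|w\|^{2}<N(z)\}$, after the substitution $w=\sqrt{N(z)}\,\zeta$, is a Beta integral producing a power $N(z)^{|m|+d_{0}+\alpha}$ times a ratio of Gamma functions; these ratios are exactly what assemble into the Pochhammer symbols $(\alpha-n)_{\bullet}$, and the substitution turns $\|w\|^{2}/N(z)$ into $\|\widetilde{w}\|^{2}$ of \eqref{00000}. The base quantity, being the diagonal kernel of $\prod_{j}\Omega_{j}$ for a weight $N^{s}$, is by homogeneity again a power of $N$ times a constant given by Hua's integral formula, and this constant is precisely what the polynomial $\widetilde{\chi}$ of Section~2 encodes, with argument linear in $|m|$ and $\alpha$. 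All powers of $N(z)$ then cancel.

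\medskip\noindent Finally I would resum over $m$: summing first over all $m$ with $|m|=\ell$ and then over $\ell$ reduces the series to the generating identity $\sum_{\ell}\frac{(c)_{\ell}}{\ell!}x^{\ell}=(1-x)^{-c}$ and its $x$-derivatives, evaluated at $x=\|\widetilde{w}\|^{2}$. Taylor-expanding the degree-$d$ polynomial $\widetilde{\chi}$ about the point $d$ produces the coefficients $D^{j}\widetilde{\chi}(d)/j!$, while the Gamma ratios collapse into $(\alpha-n)_{j+d_{0}}$; multiplying by $\exp\{-\alpha\Phi\}=(N(z)-\|w\|^{2})^{\alpha}$ absorbs the negative powers and leaves $\sum_{j=0}^{d}\frac{D^{j}\widetilde{\chi}(d)}{j!}(1-\|\widetilde{w}\|^{2})^{d-j}(\alpha-n)_{j+d_{0}}$, which is \eqref{eq1.5}. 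I expect this resummation to be the main obstacle: one has to match the Gamma-function ratios from the fibre Beta integral with the shifted values of the Hua polynomial $\widetilde{\chi}$ and recognise the resulting finite sum in the correct powers of $1-\|\widetilde{w}\|^{2}$, which is exactly where the derivatives $D^{j}\widetilde{\chi}(d)$ and the Pochhammer symbols appear. The lower bound on $\alpha$ (the $\nu=0$ case of the stated condition, namely $\alpha>\max\{n,\frac{p_{1}-1}{\mu_{1}},\ldots,\frac{p_{k}-1}{\mu_{k}}\}$) is used throughout to guarantee $\mathcal{H}_{\alpha}\neq\{0\}$ and the convergence of both integral families.
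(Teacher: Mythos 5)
Your proposal is correct, and its computational skeleton matches the one this paper uses (note the paper does not reprove the statement directly: it quotes Feng--Tu and recovers it as the $\nu=0$ specialization of Theorem \ref{Th:2.4} plus Corollary \ref{Th:3.1}, where $\sigma(t)$ collapses to $\sigma(d)=1$ and $\phi(x)=\prod_i\mu_i^{-d_i}\chi_i(\mu_ix-p_i)$). The genuine difference is in how the base factor is handled. The paper decomposes $\mathcal{H}_\alpha$ into irreducible $\mathcal{K}$-modules $\mathcal{P}^{(0)}_{\lambda_0}\otimes\cdots\otimes\mathcal{P}^{(k)}_{\lambda_k}$ over all partitions, computes each norm by Schur's lemma together with \eqref{eq} and Hua's integral \eqref{1.2}, and then resums the base partitions with the Faraut--Kor\'anyi formula \eqref{1.7}; you instead decompose only in the fibre variable (monomials $w^m$, orthogonal by the radial weight) and treat the base holistically, using transitivity of $\mathrm{Aut}(\Omega_j)$ and the transformation rule $N_{\Omega_j}(\phi_j(z_j),\overline{\phi_j(z_j)})^{p_j}=|J\phi_j|^2N_{\Omega_j}(z_j,\overline{z_j})^{p_j}$ to pin the diagonal weighted kernel with weight $\prod_jN_{\Omega_j}^{\mu_j(\alpha+\ell)-p_j}$ down to $\prod_j\chi_j(\mu_j(\alpha+\ell)-p_j)/\pi^{d}$-type constants times $\prod_jN_{\Omega_j}^{-\mu_j(\alpha+\ell)}$. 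This Forelli--Rudin-style inflation is more elementary (no Peter--Weyl or Schur machinery in the base) and yields the $\nu=0$ case directly, but it exploits that the weight is a pure power of the generic norms; the paper's partitionwise computation is what survives when $\nu\neq0$, since the extra factor $\prod_i(\nu_i+\frac{1}{1-\|w\|^2})^{d_i}$ destroys the closed-form base kernel and leaves the infinite series \eqref{eq2.37}. One small correction: the final expansion of $\widetilde{\chi}$ about $d$ must be the Newton forward-difference expansion $\widetilde{\chi}(x)=\sum_{j=0}^d\frac{D^j\widetilde{\chi}(d)}{j!}(x-d)_j$ in falling factorials (as in \eqref{eq2.41}), not a Taylor expansion; with it, the merge $(\alpha-n)_{d_0+\ell}\,(\alpha+\ell-d)_j=(\alpha-n)_{d_0+j+\ell}$ collapses each $\ell$-sum by the plain binomial series, with no $x$-derivatives needed, and multiplying by $(1-\|\widetilde{w}\|^2)^{\alpha}$ gives exactly \eqref{eq1.5}. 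Since you name the correct coefficients $D^j\widetilde{\chi}(d)/j!$, this is a slip of terminology rather than a gap.
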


So, firstly, we will compute the expression of the Rawnsley's
$\varepsilon$-function for any $\nu>-1$ as follows.

\begin{Theorem}\label{Th:2.4}{Let $\Omega_i$ be an irreducible bounded symmetric domain  in
$\mathbb{C}^{d_i}$ in its Harish-Chandra realization, and denote the
generic norm $N_{\Omega_i}(z_i,\overline{z_i})$, the dimension $d_i$
and the genus $p_i$ for $\Omega_i$ $(1\leq i \leq k)$. For $\alpha>\max\{n,\frac{p_1-1}{\mu_1(1+\nu_1)},\ldots,\frac{p_k-1}{\mu_k(1+\nu_k)}\}$.
Then the Rawnsley's $\varepsilon$-function associated with
$\big(\big(\prod_{j=1}^k\Omega_j\big)^{{\mathbb{B}}^{d_0}}(\mu),
g(\mu;\nu)\big)$ can be written as
\begin{equation}\label{eq2.37}
     \varepsilon_{(\alpha,g(\mu;\nu))}(z_1,\ldots,z_k,w)=(\alpha-n)_n(1-\|\widetilde{w}\|^2)^{\alpha}
\sum_{t=0}^{+\infty}\psi(\alpha,t)\frac{(\alpha)_t}{t!}\|\widetilde{w}\|^{2t},
\end{equation}
where
\begin{equation*}
\widetilde{w}=\frac{w}{\prod_{j=1}^kN_{\Omega_j}(z_j,\overline{z_j})^{\frac{\mu_j}{2}}},\;\psi(x,y)=\frac{\prod_{i=1}^k\chi_i(\mu_i((1+\nu_i)x+y)-p_i)}{\prod_{i=1}^k\mu_i^{d_i}\sum_{t=0}^d\sigma(t)(x-n)_{d-t}(x+y-t)_t},
\end{equation*}
and
\begin{equation}\label{f1}
   \sigma(t)=\sum_{\sum_{i=1}^kt_i=d-t\atop
t_i\geq 0,1\leq i\leq k}\prod_{i=1}^k{d_i\choose t_i}\nu_i^{t_i}.
\end{equation}

 }\end{Theorem}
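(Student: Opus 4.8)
The plan is to compute the weighted Bergman kernel $K_\alpha$ on the diagonal and then read off $\varepsilon_{(\alpha,g(\mu;\nu))}=\exp\{-\alpha\Phi\}K_\alpha$ from \eqref{eq1.4}. Write $N_j:=N_{\Omega_j}(z_j,\overline{z_j})$, $X:=\prod_{j=1}^kN_j^{\mu_j}$ and $Y:=X-\|w\|^2$, so that $\exp\{-\alpha\Phi\}=\big(\prod_jN_j^{\mu_j\nu_j}\big)^{\alpha}Y^{\alpha}$ and $\|\widetilde w\|^2=\|w\|^2/X=1-Y/X$. The first step is to compute the volume form $\omega(\mu;\nu)^n/n!$, i.e. the determinant of the complex Hessian $\big(\partial^2\Phi/\partial Z_i\partial\overline{Z_j}\big)$. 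Direct differentiation puts this Hessian in the block form $\left(\begin{smallmatrix}A&B\\ B^{*}&C\end{smallmatrix}\right)$, where the $w\overline w$ block is $C=\big(\tfrac1Y\delta_{ab}+\tfrac1{Y^2}\overline{w_a}w_b\big)_{a,b=1}^{d_0}$ and both mixed blocks $B,B^{*}$ have rank one, their entries being proportional to $w_b$ (resp.\ $\overline{w_a}$) times the single covector with components $\mu_i\,\partial_{z_i}\ln N_i$. Using $\det C=X/Y^{d_0+1}$ and the Schur identity $\det=\det C\cdot\det(A-BC^{-1}B^{*})$, a Sherman--Morrison computation produces the decisive cancellation: the rank-one term $BC^{-1}B^{*}$ exactly annihilates the rank-one part of $A$, leaving $A-BC^{-1}B^{*}=\bigoplus_{j}\mu_j(\nu_j+X/Y)\big(-\partial\overline\partial\ln N_j\big)$ block-diagonal across the factors $\Omega_j$. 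Since $\det(-\partial\overline\partial\ln N_j)$ is a constant times $N_j^{-p_j}$, combining with the weight gives the density (against Lebesgue measure)
\[
\exp\{-\alpha\Phi\}\frac{\omega^n}{n!}=\mathrm{const}\cdot\prod_{j=1}^kN_j^{\mu_j(1+\nu_j)\alpha-\mu_jd_0-p_j}\,(1-\tau)^{\alpha-n-1}\prod_{j=1}^k\big((1+\nu_j)-\nu_j\tau\big)^{d_j},\qquad \tau:=\|\widetilde w\|^2 .
\]

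Next I would exploit the rotational symmetry in $w$. Expanding $f=\sum_{m}f_m(z)\,w^m$ over multi-indices $m$, the density depends on $w$ only through $\tau$, so distinct monomials are orthogonal and, after the substitution $w=\sqrt X\,\widetilde w$ (mapping each fibre to $\mathbb B^{d_0}$), one finds $\|f_mw^m\|^2=c_m\,J_t\int_{\prod_j\Omega_j}|f_m|^2\prod_jN_j^{\mu_j((1+\nu_j)\alpha+t)-p_j}\,dV_z$ with $t=|m|$ and
\[
J_t=\int_0^1 s^{\,t+d_0-1}(1-s)^{\alpha-n-1}\prod_{j=1}^k\big((1+\nu_j)-\nu_js\big)^{d_j}\,ds .
\]
Thus each graded piece is a weighted Bergman space on $\prod_j\Omega_j$ with weight $\prod_jN_j^{\mu_j((1+\nu_j)\alpha+t)-p_j}$, whose diagonal reproducing kernel factorizes as $\prod_j\chi_j\big(\mu_j((1+\nu_j)\alpha+t)-p_j\big)\,N_j^{-\mu_j((1+\nu_j)\alpha+t)}$ by the weighted Bergman kernel of a bounded symmetric domain recalled in Section $2$. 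Using the sphere identity $\sum_{|m|=t}|w^m|^2/A_m=\frac{(d_0-1+t)!}{2\pi^{d_0}\,t!}\|w\|^{2t}$ (with $A_m$ the sphere integral of $|\xi^m|^2$) to sum the fibre monomials and re-assembling the series gives $K_\alpha$; multiplying by $\exp\{-\alpha\Phi\}$ then collapses the powers of $N_j$ to the global factor $(1-\|\widetilde w\|^2)^{\alpha}$ and turns $\|w\|^{2t}$ into $\|\widetilde w\|^{2t}$.

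It remains to evaluate $J_t$ and match constants. Setting $u=1-s$ and expanding binomially, $\prod_j\big((1+\nu_j)-\nu_js\big)^{d_j}=\prod_j(1+\nu_ju)^{d_j}=\sum_{r=0}^d\sigma(d-r)\,u^{r}$ with $\sigma$ exactly the coefficient in \eqref{f1}, so $J_t$ becomes a finite sum of Beta integrals; converting $\Gamma$-ratios to Pochhammer symbols and reindexing $r\mapsto d-r$ yields
\[
J_t=\frac{\Gamma(t+d_0)\,\Gamma(\alpha-n)}{\Gamma(\alpha+t)}\sum_{s=0}^{d}\sigma(s)\,(\alpha-n)_{d-s}\,(\alpha+t-s)_s .
\]
Substituting this, together with $(\alpha-n)_n(\alpha)_t=\Gamma(\alpha+t)/\Gamma(\alpha-n)$, into the assembled kernel reproduces \eqref{eq2.37} with $\psi$ as stated, the surviving normalizations collapsing to the single factor $\prod_j\mu_j^{d_j}$ in the denominator of $\psi$. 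The hypotheses on $\alpha$ enter precisely here: $\alpha>n$ makes $(1-s)^{\alpha-n-1}$ integrable at $s=1$, so $J_t<\infty$, while $\alpha>\frac{p_j-1}{\mu_j(1+\nu_j)}$ guarantees $\mu_j((1+\nu_j)\alpha+t)-p_j>-1$ for every $t\ge0$, i.e.\ the weighted Bergman spaces on the $\Omega_j$ are nontrivial and $\chi_j$ is evaluated in its valid range; polynomiality of the $\chi_j$ then forces $\psi(\alpha,t)(\alpha)_t/t!$ to grow only polynomially in $t$, so the series converges for $\|\widetilde w\|<1$.

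I expect the main obstacle to be the Hessian determinant: establishing the rank-one structure of the mixed blocks and the exact Schur-complement cancellation is the linchpin, since it is what decouples the symmetric factors and makes the product of one-factor weighted Bergman kernels appear. The secondary difficulty is purely bookkeeping---tracking the Bergman constants of the $\Omega_j$, the sphere integrals $A_m$, and the $\tfrac{\sqrt{-1}}{2\pi}$ normalizations so that they collapse to $\prod_j\mu_j^{d_j}$---which is routine but error-prone, and is where the precise normalization of $\chi_j$ from Section $2$ is needed.
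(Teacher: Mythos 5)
Your proposal is correct and reaches \eqref{eq2.37} along the same overall skeleton as the paper (volume form, graded decomposition of $\mathcal{H}_\alpha$, Hua/Beta integrals, resummation), but two of its components are executed genuinely differently. First, you compute the Monge--Amp\`ere determinant directly at a general point via Schur complement and Sherman--Morrison; I checked the claimed cancellation: with $v=\sum_j\mu_j\partial_z\ln N_j$, one gets $A=\bigoplus_j\mu_j(\nu_j+X/Y)(-\partial\overline\partial\ln N_j)+\frac{X\|w\|^2}{Y^2}v^t\overline{v}$, $B=-\frac{X}{Y^2}v^tw$, $\det C=X/Y^{d_0+1}$, and indeed $BC^{-1}B^{*}=\frac{X\|w\|^2}{Y^2}v^t\overline{v}$, so the rank-one parts annihilate exactly and your density agrees with \eqref{eq2.6} after using $1-\|\widetilde w\|^2=Y/X$. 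The paper instead proves Lemma \ref{Le:2.2} softly, by the isometry $F$ of Lemma \ref{Le:2.1} (built from \eqref{eq2.5}) that moves any point to $(0,\ldots,0,\widetilde w)$, where the Hessian is block diagonal as in \eqref{eq2.24}; your route is self-contained linear algebra, while the paper's explains structurally why the volume form depends only on $\|\widetilde w\|^2$. Second, for the kernel you use only the $U(d_0)$-symmetry of the fibre (monomial expansion in $w$, a Forelli--Rudin inflation with $w=\sqrt{X}\,\widetilde w$) and then quote the closed form $\frac{C_{\Omega_j}\chi_j(s)}{\pi^{d_j}}N_j^{-(s+p_j)}$, $s>-1$, for the one-domain weighted Bergman kernel; the paper instead runs the full Peter--Weyl decomposition over all partitions $(\lambda_0,\ldots,\lambda_k)$ with Schur's lemma, the Faraut integral formula \eqref{eq}, and resummation via the Hua--Faraut--Kor\'anyi expansion \eqref{1.7} --- which is precisely the proof of the fact you quote, so you should cite \cite{FK} or \cite{F} for it rather than Section 2, which records the ingredients but not the statement itself. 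Your Beta-integral evaluation of $J_t$ (expanding $\prod_j(1+\nu_ju)^{d_j}=\sum_r\sigma(d-r)u^r$) coincides term by term with the paper's computation of the integral $\textcircled{1}$, and your constants, the role of the two hypotheses on $\alpha$ (integrability at $s=1$, and $\mu_j((1+\nu_j)\alpha+t)-p_j>-1$ for all $t\geq 0$), and the convergence argument ($\psi(\alpha,t)\to 1$ as $t\to\infty$, positivity of the denominator since $1+\nu_j(1-s)>0$) all check out.
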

\vskip 5pt

Obviously, when $\nu=0$, the Rawnsley's
$\varepsilon$-function $\varepsilon_{(\alpha,g(\mu;\nu))}$ of
$\big(\big(\prod_{j=1}^k\Omega_j\big)^{{\mathbb{B}}^{d_0}}(\mu),
g(\mu;\nu)\big)$ becomes a polynomial in $1-\|\widetilde{w}\|^2$ (see \eqref{eq1.5}). However, by Theorem \ref{Th:2.4}, we know that the Rawnsley's
$\varepsilon$-function $\varepsilon_{(\alpha,g(\mu;\nu))}$ may not be a polynomial in $1-\|\widetilde{w}\|^2$ for a general $\nu$. So we are interested in finding some $\nu_{0}$ such that the Rawnsley's
$\varepsilon$-function is a polynomial in $1-\|\widetilde{w}\|^2$.

In this paper, with the expression of the Rawnsley's
$\varepsilon$-function $\varepsilon_{(\alpha,g(\mu;\nu))}$ of
$\big(\big(\prod_{j=1}^k\Omega_j\big)^{{\mathbb{B}}^{d_0}}(\mu),
g(\mu;\nu)\big)$ (see Theorem \ref{Th:2.4}), we obtain necessary and
sufficient conditions for $\varepsilon_{(\alpha,g(\mu;\nu))}$ being a
polynomial in $1-\|\widetilde{w}\|^2$.

\begin{Corollary}\label{Th:3.1}{
Let $\Omega_i\subseteq \mathbb{C}^{d_i}$ be an irreducible bounded symmetric domain, and denote the
generic norm $N_{\Omega_i}(z_i,\overline{z_i})$, the dimension $d_i$
and the genus $p_i$ for $\Omega_i$ $(1\leq i \leq k)$. Set
$n=\sum_{j=0}^kd_j$ and $d=\sum_{j=1}^kd_j$. Then the Rawnsley's $\varepsilon$-function associated with
$\left(\big(\prod_{j=1}^k\Omega_j\big)^{{\mathbb{B}}^{d_0}}(\mu),g(\mu;\nu)\right)$
can be written as
\begin{equation}\label{eq2.38}
    \varepsilon_{(\alpha,g(\mu;\nu))}(z_1,\ldots,z_k,w)=\sum_{j=0}^dc_j(\alpha)\left(1-{\|\widetilde{w}\|^2}\right)^{d-j}
\end{equation}
if only and if
$$\alpha>\max\left\{n,\frac{p_1-1}{\mu_1(1+\nu_1)},\ldots,\frac{p_k-1}{\mu_k(1+\nu_k)}\right\}$$
and
\begin{equation}\label{eq2.39}
  \phi(x)=(x-d)_d\psi(\alpha,x-\alpha)=\frac{(x-d)_d\prod_{i=1}^k\chi_i(\mu_i(\nu_i\alpha+x)-p_i)}{\prod_{i=1}^k\mu_i^{d_i}\sum_{t=0}^d\sigma(t)(\alpha-n)_{d-t}(x-t)_t}
\end{equation}
 is a polynomial in $x$.

Furthermore, \eqref{eq2.38} can be re-written as
\begin{equation}\label{eq2.40}
    \varepsilon_{(\alpha,g(\mu;\nu))}(z_1,\ldots,z_k,w)=\sum_{j=0}^d\frac{D^j\phi(d)}{j!}(\alpha-n)_{d_0+j}\left(1-{\|\widetilde{w}\|^2}\right)^{d-j},
\end{equation}
where $D^j\phi(x)$ denotes the  $j$-order difference of $\phi$ at
$x$, that is
\begin{equation}\label{eq2.41}
D^j\phi(x)=\sum_{l=0}^j{j\choose l}(-1)^l\phi(x-l).
\end{equation}
 }\end{Corollary}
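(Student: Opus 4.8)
The plan is to convert the question about the variable $\|\widetilde w\|^2$ into a one-variable statement about the rational function $\phi$, by expanding both sides as power series in $s:=\|\widetilde w\|^2$ and matching coefficients. By Theorem \ref{Th:2.4}, under the standing hypothesis $\alpha>\max\{n,\frac{p_1-1}{\mu_1(1+\nu_1)},\ldots,\frac{p_k-1}{\mu_k(1+\nu_k)}\}$ (which is exactly what makes the $\varepsilon$-function well defined and yields the series \eqref{eq2.37}), one has
$$\varepsilon_{(\alpha,g(\mu;\nu))}=(\alpha-n)_n(1-s)^{\alpha}\sum_{t=0}^{+\infty}\psi(\alpha,t)\frac{(\alpha)_t}{t!}s^{t}.$$
On the other hand, expanding the candidate right-hand side of \eqref{eq2.38} through the binomial series $(1-s)^{d-j-\alpha}=\sum_{t\ge0}\frac{(\alpha-d+j)_t}{t!}s^{t}$, I would compare the coefficient of $s^{t}$ on the two sides. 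This shows that \eqref{eq2.38} holds (for some functions $c_j(\alpha)$) if and only if
$$(\alpha-n)_n\,\psi(\alpha,t)\,(\alpha)_t=\sum_{j=0}^{d}c_j(\alpha)\,(\alpha-d+j)_t,\qquad t=0,1,2,\ldots.$$

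The next step is to rewrite this coefficient identity in the variable $x=\alpha+t$, so that $\phi$ surfaces. Using $(\alpha)_t=\Gamma(x)/\Gamma(\alpha)$, the identity $(x-d)_d=\Gamma(x)/\Gamma(x-d)$, and the definition $\phi(x)=(x-d)_d\,\psi(\alpha,x-\alpha)$, I get $\psi(\alpha,t)(\alpha)_t=\phi(x)\,\Gamma(x-d)/\Gamma(\alpha)$; similarly $(\alpha-d+j)_t=(x-d)_j\,\Gamma(x-d)/\Gamma(\alpha-d+j)$. Cancelling the common factor $\Gamma(x-d)$ (nonzero since $x-d=\alpha+t-d>0$) reduces the identity above to
$$\frac{(\alpha-n)_n}{\Gamma(\alpha)}\,\phi(x)=\sum_{j=0}^{d}\frac{c_j(\alpha)}{\Gamma(\alpha-d+j)}\,(x-d)_j.$$
The right-hand side is a polynomial in $x$ of degree $\le d$, and $\{(x-d)_j\}_{j=0}^{d}$ is a basis of that space; hence existence of the $c_j(\alpha)$ is equivalent to $\phi$ agreeing with a polynomial of degree $\le d$ at the infinitely many points $x=\alpha+t$. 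Since $\phi$ is a priori a rational function, agreement at infinitely many points forces $\phi$ to be that polynomial, giving the desired equivalence. A degree count closes the argument: the numerator of $\phi$ in \eqref{eq2.39} has degree $2d$ (the factor $(x-d)_d$ contributes $d$ and each $\chi_i$ contributes $d_i$, with $\sum_i d_i=d$), while the denominator has degree $d$ because $\sigma(d)=1\neq0$ keeps the top term $(x-d)_d$; thus whenever $\phi$ is a polynomial it automatically has degree $\le d$, which legitimises the basis expansion.

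The last step is the explicit formula \eqref{eq2.40}. Writing $\phi(x)=\sum_{m=0}^{d}b_m(x-d)_m$ and reading off the displayed identity gives $c_j(\alpha)=\dfrac{(\alpha-n)_n\,\Gamma(\alpha-d+j)}{\Gamma(\alpha)}\,b_j$. To recover $b_j$ I would apply the backward difference $D^{j}$ of \eqref{eq2.41} at $x=d$: from $\phi(d-l)=\sum_m b_m(-l)_m$ and the vanishing $(-l)_m=0$ for $m>l$, an interchange of summation followed by $\sum_{l'}\binom{j-m}{l'}(-1)^{l'}=(1-1)^{\,j-m}$ (which vanishes unless $m=j$) collapses the sum to the single term $m=j$, yielding $D^{j}\phi(d)=j!\,b_j$. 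Substituting $b_j=D^{j}\phi(d)/j!$ and simplifying the prefactor via $(\alpha-n)_n=\Gamma(\alpha)/\Gamma(\alpha-n)$ together with $n=d+d_0$, so that $\frac{(\alpha-n)_n\,\Gamma(\alpha-d+j)}{\Gamma(\alpha)}=\frac{\Gamma(\alpha-d+j)}{\Gamma(\alpha-n)}=(\alpha-n)_{d_0+j}$, gives precisely $c_j(\alpha)=\frac{D^{j}\phi(d)}{j!}(\alpha-n)_{d_0+j}$, which is \eqref{eq2.40}.

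The main obstacle I anticipate is the logical passage in the middle step: the coefficient identity is only known on the discrete set $x=\alpha+t$ with $t\in\mathbb{Z}_{\ge0}$, so I must invoke the principle that a rational function coinciding with a degree-$\le d$ polynomial at infinitely many points must equal it, and I must control the degree of $\phi$ — hence the degrees of the $\chi_i$ and the nonvanishing $\sigma(d)=1$ — to guarantee that the basis $\{(x-d)_j\}_{j=0}^{d}$ is adequate. The finite-difference evaluation is routine but demands care with the backward-difference convention of \eqref{eq2.41}.
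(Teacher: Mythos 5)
Your proposal is correct and follows essentially the same route as the paper's own proof: expand both sides in powers of $\|\widetilde{w}\|^2$, match coefficients, pass to the variable $x=\alpha+t$ so that $\phi(\alpha+t)=\psi(\alpha,t)(\alpha+t-d)_d$ appears, and then identify $c_j(\alpha)=\frac{D^j\phi(d)}{j!}(\alpha-n)_{d_0+j}$ via the Newton expansion $\phi(x)=\sum_{j=0}^d\frac{D^j\phi(d)}{j!}(x-d)_j$. In fact you make explicit two points the paper leaves implicit or cites from \cite{F} --- that a rational function agreeing with a degree-$\le d$ polynomial on the infinite set $\{\alpha+t : t\in\mathbb{Z}_{\ge 0}\}$ equals it (using $\sigma(d)=1$ for the degree count), and the backward-difference identity $D^j\big[(x-d)_m\big](d)=j!\,\delta_{jm}$ --- so your write-up is, if anything, more complete than the original.
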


For the Cartan-Hartogs domain $(\Omega^{\mathbb{B}^{d_{0}}}(\mu),\beta g(\mu;\nu))$, Zedda \cite{Zedda} proved that $\big(\Omega^{\mathbb{B}^{d_{0}}}(\mu),\beta g(\mu;0))$ admits a Berezin quantization. So it is natural to ask whether there is another $\nu_{0}$ such that
$$\big(\big(\prod_{j=1}^k\Omega_j\big)^{{\mathbb{B}}^{d_0}}(\mu),\\  g(\mu;\nu_0)\big)\;\;\; (k\geq 1 )$$
admits a Berezin quantization. In this paper, we also prove the following results.

\begin{Theorem}\label{Th:1.7}
Let $\Omega_{i}\subseteq \mathbb{C}^{d_{i}}$ be an irreducible
bounded symmetric domain, and denote  the rank $r_{i}$, the characteristic
multiplicities $a_{i}, b_{i}$, the dimension $d_{i}$ and the genus $p_{i}$
for $\Omega_{i}$. Let $g(\mu;\nu)$ be the metric on the generalized Cartan-Hartogs domain
$\big(\prod_{j=1}^k\Omega_j\big)^{{\mathbb{B}}^{d_0}}(\mu)$. Assume that $\mu_{i}\in W(\Omega_{i})\backslash\{0\}$, $1\leq i\leq k$,  where $W(\Omega_{i})$ are the Wallach sets defined by
$$W(\Omega_{i}):=\bigg\{0,\frac{a_{i}}{2},2\frac{a_{i}}{2},\ldots,(r_{i}-1)\frac{a_{i}}{2}\bigg\}\cup\bigg((r_{i}-1)
\frac{a_{i}}{2},+\infty\bigg).$$
If for sufficiently large $\alpha$, $\phi(x)$ (see \eqref{eq2.39}) is a polynomial in $x$,
 then the generalized Cartan-Hartogs domain $\big(\big(\prod_{j=1}^k\Omega_j\big)^{{\mathbb{B}}^{d_0}}(\mu), g(\mu;\nu)\big)$ admits a Berezin quantization.
\end{Theorem}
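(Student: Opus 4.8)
The plan is to show that the pair $\big(\big(\prod_{j=1}^k\Omega_j\big)^{{\mathbb{B}}^{d_0}}(\mu), g(\mu;\nu)\big)$ satisfies the two conditions (I) and (II) recalled in the introduction; by Engli\v{s}'s extension of Berezin's construction, these together furnish a Berezin quantization, so the proof splits into two independent verifications and I would organize it accordingly.

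Condition (II) I would read off directly from the polynomial hypothesis. Since $\phi(x)$ in \eqref{eq2.39} is assumed to be a polynomial for all large $\alpha$, Corollary \ref{Th:3.1} puts the $\varepsilon$-function into the closed form \eqref{eq2.40}, in which the dependence on the point enters only through $\|\widetilde w\|^2$ while the coefficients $\frac{D^j\phi(d)}{j!}(\alpha-n)_{d_0+j}$ depend only on $\alpha$. Viewed as a polynomial in $\alpha$, the top term comes from $j=d$ and has degree $d_0+d=n$, so the expression already has the shape $\alpha^n+B(z,\overline u)\alpha^{n-1}+C(\alpha,z,\overline u)\alpha^{n-2}$ demanded by (II) (up to the normalizing constant fixed by our conventions for $\chi_i$). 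Replacing $\|\widetilde w\|^2$ by the sesquianalytic expression $\langle w,v\rangle/\prod_j N_{\Omega_j}(z_j,\overline u_j)^{\mu_j}$ exhibits $B$ and $C$ as sesquianalytic functions, and the generic-norm inequality $|N_{\Omega_j}(z_j,\overline u_j)|^2\ge N_{\Omega_j}(z_j,\overline z_j)N_{\Omega_j}(u_j,\overline u_j)$ shows this extension has modulus $<1$, so each $(1-\|\widetilde w\|^2)^{d-j}$ stays bounded and hence $B,C$ are uniformly bounded; taking $E$ to be any infinite set of integers in the admissible range finishes (II).

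For condition (I) I would pass to the sesquianalytic extension of the potential. From \eqref{eq1.2}, $e^{\Phi(Z,\overline W)}=\prod_j N_{\Omega_j}(z_j,\overline u_j)^{-\mu_j\nu_j}\big(\prod_j N_{\Omega_j}(z_j,\overline u_j)^{\mu_j}-\langle w,v\rangle\big)^{-1}$, and the diastasis obeys $e^{-D_g(Z,W)}=|e^{\Phi(Z,\overline W)}|^2/\big(e^{\Phi(Z,\overline Z)}e^{\Phi(W,\overline W)}\big)$, so $e^{-D_g}\le 1$ with equality exactly on the diagonal is precisely the Cauchy--Schwarz inequality for the kernel $K:=e^{\Phi}$, once $K$ is known to be positive definite. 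To prove positivity I would expand $K(Z,\overline W)=\sum_{m\ge 0}\langle w,v\rangle^m\prod_{j=1}^k N_{\Omega_j}(z_j,\overline u_j)^{-\mu_j(m+1+\nu_j)}$: every $\langle w,v\rangle^m$ is positive definite, and the factor $N_{\Omega_j}^{-\mu_j(m+1+\nu_j)}$ is a reproducing kernel exactly when $\mu_j(m+1+\nu_j)\in W(\Omega_j)$. Here the hypothesis $\mu_j\in W(\Omega_j)\setminus\{0\}$ is the decisive input, since the Wallach set is stable under multiplication by positive integers; single-valuedness of these fractional powers on the simply connected $\Omega_j\times\Omega_j$ gives the global definedness required by (I).

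I expect condition (I), and inside it the positive-definiteness of $K$, to be the crux. The subtle point is the shift by $\nu_j$: it pushes the exponents $\mu_j(m+1+\nu_j)$ off the lattice $\mathbb{Z}_{>0}\mu_j$, so membership in $W(\Omega_j)$ for every $m\ge 0$ --- especially the boundary case $m=0$, where the exponent is $\mu_j(1+\nu_j)$ --- must be reconciled with the standing range $\alpha>\max\{n,(p_j-1)/(\mu_j(1+\nu_j))\}$ and with the polynomiality of $\phi$. Controlling this interplay, rather than the formal Cauchy--Schwarz step, is where the genuine work lies.
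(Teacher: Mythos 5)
Your overall architecture (verify conditions (I) and (II), then invoke Engli\v{s}) is exactly the paper's, and your treatment of condition (II) matches the paper's proof of Theorem \ref{Th:1.7}: polynomiality of $\phi$ feeds Corollary \ref{Th:3.1}, the sesquianalytic extension replaces $\|\widetilde{w}\|^2$ by $w\overline{\eta}^{t}\prod_{i=1}^k N_{\Omega_i}(z_i,\overline{\xi_i})^{-\mu_i}$, and the uniform bounds on $B$ and $C$ come from the bound $\bigl|w\overline{\eta}^{t}\prod_i N_{\Omega_i}(z_i,\overline{\xi_i})^{-\mu_i}\bigr|<1$, which is precisely Lemma \ref{lemma,inequlity} (and, as you suspected, your hedge about the leading coefficient resolves in your favor: $\sigma(d)=1$ and the $\chi_i$ are monic of degree $d_i$, so $\phi$ is monic of degree $d$ and $D^d\phi(d)/d!=1$, giving the exact shape $\alpha^n+B\alpha^{n-1}+C\alpha^{n-2}$).

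The genuine gap is in condition (I), at exactly the point you flag but do not resolve. Your route requires $\mu_j(m+1+\nu_j)\in W(\Omega_j)$ for every $m\geq 0$, and the hypotheses do not supply this: if $\mu_j$ lies in the discrete Wallach part, say $\mu_j=\ell a_j/2$ with $\ell<r_j-1$, a generic real shift $\nu_j$ puts $\mu_j(1+\nu_j)$ off the half-integer lattice and below the continuous threshold $(r_j-1)a_j/2$, where $N_{\Omega_j}^{-\mu_j(1+\nu_j)}$ is \emph{not} positive definite; stability of $W(\Omega_j)$ under integer multiples is irrelevant because of the additive shift, and neither the standing range of $\alpha$ nor polynomiality of $\phi$ rescues weight-one positivity. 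The missing idea, which is how the paper's Lemma \ref{Thm.5.1} proceeds, is that condition (I) is \emph{scale-invariant}: $D_{\beta g}=\beta D_{g}$, so it suffices to verify (I) for $\beta g(\mu;\nu)$ with a single large $\beta>\max_i\frac{(r_i-1)a_i}{2\mu_i(1+\nu_i)}$. Then in the expansion \eqref{eq6.5} every exponent $\mu_i(\beta(1+\nu_i)+j)$ exceeds $(r_i-1)a_i/2$, so by the Faraut--Kor\'anyi formula \eqref{1.7} all generalized Pochhammer coefficients $(\beta\mu_i(1+\nu_i)+\mu_i j)^{(i)}_{\lambda^{(i)}}$ are strictly positive, the extended kernel is a sum $\sum_l f_l(z,w)\overline{f_l(\xi,\eta)}$ as in \eqref{eq6.6}, and Cauchy--Schwarz plus the presence of constants and degree-one polynomials among the $f_l$ yields $e^{-D}\leq 1$ with equality iff the points coincide. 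Note also that the Wallach hypothesis $\mu_i\in W(\Omega_i)\setminus\{0\}$ is actually consumed in Lemma \ref{lemma,inequlity} (the bounds needed for condition (II)), not in the positivity step where you tried to spend it.
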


\vskip 5pt
When $\varepsilon_{(\alpha,g(\mu;\nu))}$ ia a positive constant, we have the following result.
\begin{Theorem}\label{co1.5}
Let $\big(\big(\prod_{j=1}^k\Omega_j\big)^{{\mathbb{B}}^{d_0}}(\mu), g(\mu;\nu)\big)$ be the generalized Cartan-Hartogs domain with canonical metric $g(\mu;\nu)$. Then for $\alpha>\max\{n,\frac{p_1-1}{\mu_1(1+\nu_1)},\ldots,\frac{p_k-1}{\mu_k(1+\nu_k)}\}$, the metric $\alpha g(\mu;\nu)$ is balanced if and only if for all $x$, $y \in \mathbb{R}$,
\begin{equation}\label{Blanced}
\prod\limits_{i=1}^{k}\chi_{i}(\mu_{i}((1+\nu_{i})x+y)-p_{i})=\left(\prod\limits_{i=1}^{k}\mu_{i}^{d_{i}}\right)\sum\limits_{t=0}^{d}
\left(\sum_{\sum_{i=1}^kt_i=t\atop
t_i\geq 0,1\leq i\leq k}\prod_{i=1}^k{d_i\choose t_i}\nu_i^{t_i}\right)(x-n)_{t}(x+y-d+t)_{d-t}.
\end{equation}
Moreover, under this situation, $\big(\big(\prod_{j=1}^k\Omega_j\big)^{{\mathbb{B}}^{d_0}}(\mu), g(\mu;\nu)\big)$ must admit a Berezin quantization for $\mu_{i}\in W(\Omega_{i})\backslash\{0\}$ $(1\leq i\leq k)$.
\end{Theorem}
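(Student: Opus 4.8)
The plan is to reduce the balancedness of $\alpha g(\mu;\nu)$ to an identity for the rational function $\psi$ from Theorem \ref{Th:2.4}, and then to feed the resulting information into Theorem \ref{Th:1.7}. Write $s=\|\widetilde{w}\|^2$ and abbreviate $\psi(\alpha,t)=\mathrm{Num}(\alpha,t)/\mathrm{Den}(\alpha,t)$, where
$$\mathrm{Num}(x,y)=\prod_{i=1}^k\chi_i(\mu_i((1+\nu_i)x+y)-p_i),\qquad \mathrm{Den}(x,y)=\prod_{i=1}^k\mu_i^{d_i}\sum_{t=0}^d\sigma(t)(x-n)_{d-t}(x+y-t)_t .$$
By definition $\alpha g(\mu;\nu)$ is balanced exactly when $\varepsilon_{(\alpha,g(\mu;\nu))}$ is a (necessarily positive) constant, i.e. by \eqref{eq2.37} when $(1-s)^{\alpha}\sum_{t\ge0}\psi(\alpha,t)\frac{(\alpha)_t}{t!}s^t$ is independent of $s$. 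The first step uses the binomial generating function $(1-s)^{-\alpha}=\sum_{t\ge0}\frac{(\alpha)_t}{t!}s^t$: the displayed series is a constant $C$ if and only if $\sum_{t\ge0}\psi(\alpha,t)\frac{(\alpha)_t}{t!}s^t=C\sum_{t\ge0}\frac{(\alpha)_t}{t!}s^t$, and comparing coefficients of each $s^t$ (all carrying the nonzero factor $(\alpha)_t/t!$, since $\alpha>n$) shows this is equivalent to $\psi(\alpha,t)=C$ for every integer $t\ge0$.

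Since $\mathrm{Num}(\alpha,\cdot)$ and $\mathrm{Den}(\alpha,\cdot)$ are polynomials in $y$ of degree $d$, constancy of their quotient on infinitely many integers forces the polynomial identity $\mathrm{Num}(\alpha,y)=C\,\mathrm{Den}(\alpha,y)$ for all $y$. Comparing the leading $y^d$-coefficients of the two sides (here one uses the explicit leading term of each $\chi_i$ together with $\sigma(d)=1$, so that both sides have the same leading coefficient $\prod_i\mu_i^{d_i}$) then pins down $C=1$, and hence the balanced value $\varepsilon_{(\alpha,g(\mu;\nu))}=(\alpha-n)_n$. So balancedness is equivalent to $\mathrm{Num}(\alpha,y)=\mathrm{Den}(\alpha,y)$ for all $y$. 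To identify this with \eqref{Blanced}, re-index the sum in $\mathrm{Den}$ by $t\mapsto d-t$: this turns $\sigma(t)=\sum_{\sum t_i=d-t}\prod_i\binom{d_i}{t_i}\nu_i^{t_i}$ into the inner sum of \eqref{Blanced} and turns $(x-n)_{d-t}(x+y-t)_t$ into $(x-n)_t(x+y-d+t)_{d-t}$, so $\mathrm{Den}(x,y)$ is precisely the right-hand side of \eqref{Blanced}. Thus the balanced identity is exactly \eqref{Blanced} evaluated on the slice $x=\alpha$.

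It remains to promote the slice $x=\alpha$ to all $x$. Here I would study $R(x,y):=\mathrm{Num}(x,y)-\mathrm{Den}(x,y)$ and show, by a direct comparison of coefficients, that the $x$-dependence of $\mathrm{Num}$ and of $\mathrm{Den}$ coincides, so that $R$ does not actually depend on $x$ (I have verified this for the top powers of $y$ and in the rank-one case, where $R$ is even a constant); vanishing of $R$ on the line $x=\alpha$ then gives $R\equiv0$, i.e. \eqref{Blanced} for all $x,y$. Conversely, if \eqref{Blanced} holds identically then $\psi\equiv1$, whence $\psi(\alpha,t)=1$ for all $t$ and $\varepsilon_{(\alpha,g(\mu;\nu))}=(\alpha-n)_n$ is a positive constant, so $\alpha g(\mu;\nu)$ is balanced. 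This $x$-independence of $R$—the bridge from the single slice to the full two-variable identity—is the step I expect to be the main obstacle; the rest is the generating-function bookkeeping above.

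For the final assertion, once \eqref{Blanced} holds we have $\psi\equiv1$, so the function $\phi$ of \eqref{eq2.39} satisfies $\phi(x)=(x-d)_d\,\psi(\alpha,x-\alpha)=(x-d)_d$ for every admissible $\alpha$; in particular $\phi$ is a polynomial in $x$ for all sufficiently large $\alpha$. Since $\mu_i\in W(\Omega_i)\setminus\{0\}$ by hypothesis, the hypotheses of Theorem \ref{Th:1.7} are met verbatim, and it yields a Berezin quantization on $\big(\prod_{j=1}^k\Omega_j\big)^{\mathbb{B}^{d_0}}(\mu)$. This completes the two assertions of the statement.
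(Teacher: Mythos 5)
Your reduction of balancedness to the statement ``$\psi(\alpha,t)$ is constant in $t$'' via the expansion $(1-s)^{-\alpha}=\sum_{t\geq 0}\frac{(\alpha)_t}{t!}s^t$, the normalization $C=1$ (your leading-coefficient comparison with $\sigma(d)=1$ is equivalent to the paper's observation that $\psi(\alpha,t)\to 1$ as $t\to\infty$), and the re-indexing $t\mapsto d-t$ identifying the denominator with the right-hand side of \eqref{Blanced} all coincide with the paper's proof; likewise your treatment of the last assertion (the paper notes $\varepsilon_{(\alpha,g(\mu;\nu))}\equiv(\alpha-n)_n$ and invokes Lemma \ref{Thm.5.1} plus Berezin's theorem, which amounts to the same thing as your appeal to Theorem \ref{Th:1.7} with $\phi(x)=(x-d)_d$). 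The genuine gap is exactly where you feared: your proposed bridge, that $R(x,y)=\mathrm{Num}(x,y)-\mathrm{Den}(x,y)$ is independent of $x$, is \emph{false} in general. What is true is that the top-degree homogeneous parts always cancel, because $\sum_{t=0}^d\sigma(t)x^{d-t}(x+y)^t=\prod_{i=1}^k\bigl((1+\nu_i)x+y\bigr)^{d_i}$ by the multinomial theorem; this explains why your checks of the top powers of $y$, and the rank-one Thullen case (where $d=1$ leaves nothing below the top except a constant), succeeded. But already for $k=1$, $\Omega_1=\mathbb{B}^2\subset\mathbb{C}^2$ (so $r_1=1$, $a_1=2$, $b_1=1$, $p_1=3$, $\chi_1(s)=(s+1)(s+2)$), $d_0=1$, $\mu=1$, $\nu=1$, one computes
\begin{equation*}
\mathrm{Num}(x,y)=(2x+y-1)(2x+y-2),\qquad
\mathrm{Den}(x,y)=(x-3)(x-2)+2(x-3)(x+y-1)+(x+y-2)(x+y-1),
\end{equation*}
so that $R(x,y)=10x+6y-12$, which visibly depends on $x$. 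Hence vanishing of $R$ on a single slice $x=\alpha$ cannot be promoted to $R\equiv 0$ by your argument.

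For the repair, note that the paper's own proof is terse at the very same spot: after obtaining $\psi(\alpha,t)\equiv 1$ it simply asserts \eqref{Blanced}. The reading that makes this rigorous is that the balancedness hypothesis is quantified over \emph{every} admissible $\alpha>\max\{n,\frac{p_1-1}{\mu_1(1+\nu_1)},\ldots,\frac{p_k-1}{\mu_k(1+\nu_k)}\}$: then your first steps give $R(\alpha,y)=0$ for all $y$ and for $\alpha$ ranging over an infinite set, and writing $R(x,y)=\sum_{j=0}^d c_j(x)y^j$, each coefficient polynomial $c_j$ has infinitely many zeros, so $R\equiv 0$. With that reading your generating-function bookkeeping closes the equivalence with no $x$-independence claim needed, and your quantization paragraph (via $\psi\equiv 1$, hence $\phi(x)=(x-d)_d$ polynomial, hence Theorem \ref{Th:1.7}) stands as written. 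For one fixed $\alpha$, the implication from balancedness to the full two-variable identity \eqref{Blanced} is not established by your argument --- nor, strictly speaking, by the paper's.
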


\begin{Remark}
When $\nu_i=0$ $(1\leq i\leq k)$, the formula \eqref{Blanced} can be re-written as
$$\prod\limits_{i=1}^{k}\chi_{i}(\mu_{i}(x+y)-p_{i})=\prod\limits_{i=1}^{k}\mu_{i}^{d_{i}}(x+y-d)_{d}.$$
Especially, if $y=0$, by \eqref{1.3} and \eqref{s}, we have
$$\prod\limits_{i=1}^{k}\prod\limits_{j=1}^{r_i}(\mu_{i}x-p_{i}+1+(j-1)\frac{a_i}{2})_{1+b_i+(r_i-j)a_{i}}=\prod\limits_{i=1}^{k}\mu_{i}^{d_{i}}\prod\limits_{j=1}^{d}(x-j).$$
This is the exactly formula $(1.7)$ of Theroem 1.4 in Feng-Tu \cite{FT2}.
\end{Remark}
\begin{Example}
Let $k=d=d_{0}=1$. Then the generalized Cartan-Hartogs domain becomes the Thullen domain
\begin{equation*}
\mathbb{B}^{\mathbb{B}}(\mu)=\left\{(z,w)\in \mathbb{C}^{2}:{\vert z\vert}^{2}+{\vert w\vert}^{\frac{2}{\mu}}<1\right\}\;\;\;(\mu>0).
\end{equation*}

Obviously, in this special case, we have $r=1$, $a=2$, $b=0$ and $p=2$ (refer to \cite{FKKLR}). Then \eqref{eq2.39} implies that
$$
  \phi(x)=\frac{(x-1)\chi(\mu(\nu\alpha+x)-2)}{\mu\sum_{t=0}^1\sigma(t)(\alpha-2)_{1-t}(x-t)_t}.$$
By the definition of $\sigma(t)$ (see \eqref{f1}), we know that $\sigma(0)=\nu$ and $\sigma(1)=1$. Therefore, the denominator of $\phi(x)$ becomes $\mu[\nu(\alpha-2)+(x-1)]$. Moreover, by \eqref{1.3}, it follows
$$\chi(\mu(\nu\alpha+x)-2)=\mu(\nu\alpha+x)-1.$$

It is easy to see that the denominator of $\phi(x)$ equals to $\chi(\mu(\nu\alpha+x)-2)$ when $\nu=\frac{1-\mu}{2\mu}$. This means $\phi(x)=x-1$. Then by the Theorem \ref{Th:1.7}, we conclude that $(\mathbb{B}^{\mathbb{B}}(\mu), g(\mu;\frac{1-\mu}{2\mu}))$ admits a Berezin quantization.
\end{Example}

\vskip 5pt

\begin{Example}
For $\nu_{i}=0$ $(1\leq i\leq k)$, it is not hard to see that
 $$\sigma(0)=\sigma(1)=\ldots=\sigma(d-1)=0,\;\sigma(d)=1$$
 by \eqref{f1}. The formula \eqref{eq2.39} yields that
 $$\phi(x)=\frac{(x-d)_d\prod_{i=1}^k\chi_i(\mu_ix-p_i)}{\prod_{i=1}^k\mu_i^{d_i}\sigma(d)(x-d)_d}={\prod\limits_{i=1}^{k}\mu_{i}^{-d_{i}}\chi_{i}(\mu_{i}x-p_{i})}.$$
By \eqref{1.3}, we know that $\phi(x)$ is a polynomial of $x$. Therefore, $\big(\big(\prod\limits_{i=1}^{k}\Omega_{i}\big)^{\mathbb{B}^{d_{0}}}(\mu), g(\mu;0)\big)$ admits a Berezin quantization for
  $\mu_{i}\in W(\Omega_{i})\backslash\{0\}$, $1\leq i\leq k$.  In particular, when $k=1$, this happens to be the result of Zedda \cite{Zedda}.
\end{Example}
\vskip 5pt

\begin{Example}
Let $\mu=(\mu_1,\mu_2)$, $\nu_2=\frac{1-\mu_2(d_1+1)}{(d_0+d_1+1)\mu_2}$. For
$\left(\left({\Omega_1}\times \mathbb{B}\right)^{{\mathbb{B}}^{d_0}}\big(\mu),\;g(\mu;0,\nu_2)\right)$, we have
$$d_{2}=1,\;r_{2}=1,\;a_{2}=2,\;b_{2}=0,\;p_{2}=2.$$
Combined with \eqref{1.3}, we get $\chi_2(\mu_2(\nu_2\alpha+x)-p_2)=\mu_2(\nu_2\alpha+x)-1$. Since $\nu_{1}=0$, hence we can obtain that
$$\sigma(0)=\sigma(1)=\ldots=\sigma(d-2)=0,\;\sigma(d-1)=\nu_2,\; \sigma(d)=1$$
by \eqref{f1}. Consequently, the denominator of $\phi(x)$ can be expressed by
$$\mu_{1}^{d_1}\mu_{2}[(x-d)_{d}+\nu_{2}(\alpha-n)(x-d+1)_{d-1}]=\mu_{1}^{d_1}\mu_{2}(x-d+1)_{d-1}[(x-d)+\nu_{2}(\alpha-n)].$$
Therefore, $\phi(x)$ can be rewritten as
$$\phi(x)=\frac{(x-d)_d\chi_1(\mu_1x-p_1)[\mu_2(\nu_2\alpha+x)-1]}{\mu_{1}^{d_1}\mu_{2}(x-d+1)_{d-1}[(x-d)+\nu_{2}(\alpha-n)]}.$$
Moreover, it is not hard to see $\mu_2(\nu_2\alpha+x)-1=\mu_{2}[(x-d)+\nu_{2}(\alpha-n)$ for $\nu_2=\frac{1-\mu_2(d_1+1)}{(d_0+d_1+1)\mu_2}$. Hence we can see
$$\phi(x)=\mu_1^{-d_1}(x-d)\chi_1(\mu_1 x-p_1)$$
by \eqref{1.3}, which means that $\phi(x)$ is a polynomial in $x$. So $\left(\left({\Omega_1}\times \mathbb{B}\right)^{{\mathbb{B}}^{d_0}}\big(\mu),\; g(\mu;0,\nu_2)\right)$ admits a Berezin quantization for $\mu_{1}\in W(\Omega_{1})\backslash\{0\}$, $\mu_2>0$.
\end{Example}

\vskip 5pt

The paper is organized as follows. In Section 2, we will calculate the explicit expression of the Rawnsley's $\varepsilon$-function. Meanwhile, using the expression of the Rawnsley's $\varepsilon$-function expansion, we obtain the necessary and sufficient conditions for $\varepsilon_{(\alpha,g(\mu;\nu))}$ to become a
polynomial in $\|\widetilde{w}\|^2$. Lastly, with the expression \eqref{eq2.40} of the Rawnsley's $\varepsilon$-function expansion of
$\big(\big(\prod_{j=1}^k\Omega_j\big)^{{\mathbb{B}}^{d_0}}(\mu),
g(\mu;\nu)\big)$, we will prove that $\big(\big(\prod_{j=1}^k\Omega_j\big)^{{\mathbb{B}}^{d_0}}(\mu), g(\mu;\nu)\big)$ admits a Berezin quantization under some conditions.\\

\setcounter{equation}{0}
\section{The Rawnsley's $\varepsilon$-function for $\big(\prod_{j=1}^k\Omega_j\big)^{{\mathbb{B}}^{d_0}}(\mu)$  with the metric $g(\mu;\nu)$}

Firstly, Let us briefly recall some basic facts on irreducible bounded symmetric domains.

Let $\Omega\subseteq \mathbb{C}^d$ be an irreducible bounded symmetric domain and let $r$, $a$, $b$, $d$, $p$, $N_{\Omega}(z,\overline{w})$ be the rank, the characteristic
multiplicities, the dimension, the genus and the
generic norm of $\Omega$. Hence we have
\begin{equation}\label{1.1}
    d=\frac{r(r-1)}{2}  a+rb+r,\quad   p=(r-1)a+b+2.
\end{equation}
For any $s>-1$, the value of the Hua integral
$\int_{\Omega}N_{\Omega}(z,\overline{z})^s dm(z)$ is given by
\begin{equation}\label{1.2}
\int_{\Omega}N_{\Omega}(z,\overline{z})^s
dm(z)=\frac{\pi^d}{C_{\Omega}\chi(s)},
\end{equation}
where $C_{\Omega}=\det(-\frac{\partial^2 N_{\Omega}}{\partial
z^t\partial\bar{z}})(0)$, $dm(z)$ denotes the Euclidean measure on
$\mathbb{C}^d$, $\chi$ is the Hua polynomial
\begin{equation}\label{1.3}
   \chi(s):=\prod_{j=1}^r\left(s+1+(j-1)\frac{a}{2}\right
   )_{1+b+(r-j)a},
\end{equation}
in which, for a  non-negative integer $m$, $(s)_m$ denotes the
raising factorial
\begin{equation}\label{s}
{(s)_m:=\frac{\Gamma(s+m)}{\Gamma(s)}=s(s+1)\ldots (s+m-1)}.
\end{equation}

Let $\mathcal{G}$ stand for the identity connected component of the
group of biholomorphic self-maps of $\Omega$, and $\mathcal{K}$ {for
the stabilizer} of the origin in $\mathcal{G}$. Under the action
$f\mapsto f\circ k \; (k\in \mathcal{K})$ of $\mathcal{K}$, the
space $\mathcal{P}$ of holomorphic polynomials on $\mathbb{C}^d$
admits the Peter-Weyl decomposition
$$\mathcal{P}=\bigoplus_{\lambda}\mathcal{P}_{\lambda},$$
 {where the summation is taken over all partitions}
$\lambda$, i.e., $r$-tuples $(\lambda_1, \lambda_2, \ldots,
\lambda_r)$ of nonnegative integers such that $\lambda_1\geq
\lambda_2 \geq \ldots \geq \lambda_r \geq 0$,  {and the spaces}
$\mathcal{P}_{\lambda}$ are $\mathcal{K}$-invariant and irreducible.
For each $\lambda$, $\mathcal{P}_{\lambda} \subset
\mathcal{P}_{|\lambda|}$, where $|\lambda|$ denotes the weight of
partition $\lambda$, i.e., $|\lambda|:=\sum_{j=1}^r \lambda_j$, and
$\mathcal{P}_{|\lambda|}$ is the space of homogeneous holomorphic
polynomials of degree $|\lambda|$.

Let
\begin{equation}\label{1.4}
    {\langle}f,g {\rangle}_{\mathcal{F}}:=\int_{\mathbb{C}^d}f(z)\overline{g(z)} d\rho_{\mathcal{F}}(z)
\end{equation}
be the Fock-Fischer inner product on the space $\mathcal{P}$ of
holomorphic polynomials on $\mathbb{C}^d$, where
\begin{equation}\label{1.5}
    d\rho_{\mathcal{F}}(z):=\exp\{-m(z,\overline{z})\}
    \frac{\left(\frac{\sqrt{-1}}{2\pi}\partial\overline{\partial}m(z,\overline{z})\right)^d}{d!}
\end{equation}
and $m(z,\overline{z}):=-\left.\frac{\partial \ln
N_{\Omega}(tz,\overline{z})}{\partial
t}\right|_{t=0}=-\left.\frac{\partial
N_{\Omega}(tz,\overline{z})}{\partial t}\right|_{t=0}.$

 For every partition $\lambda$, let $K_{\lambda}(z_1,\overline{z_2})$ be
the reproducing kernel of $\mathcal{P}_{\lambda}$ with respect to
\eqref{1.4}. The weighted Bergman kernel of the weighted Hilbert
space $A^2(\mathbb{C}^d,\rho_{\mathcal{F}})$ of square-integrable
holomorphic functions on $\mathbb{C}^d$ with the measure
$d\rho_{\mathcal{F}}$ is
\begin{equation}\label{1.6}
   \exp\{-m(z_1,\overline{z_2})\}=\sum_{\lambda}K_{\lambda}(z_1,\overline{z_2}).
\end{equation}

The kernels $K_{\lambda}(z_1,\overline{z_2})$ are related to the
generic norm  $N_{\Omega}(z_1,\overline{z_2})$ by the
Hua-Faraut-{Kor\'{a}nyi} formula
\begin{equation}\label{1.7}
N_{\Omega}(z_1,\overline{z_2})^{-s}=\sum_{\lambda}(s)_{\lambda}K_{\lambda}(z_1,\overline{z_2}),
\end{equation}
where the series converges  {uniformly} on compact subsets of
$\Omega\times\Omega$, $s\in \mathbb{C}$,  in which  $(s)_{\lambda}$
denote the generalized Pochhammer symbol
\begin{equation}\label{1.8}
   (s)_{\lambda}:=\prod_{j=1}^r\big(s-\frac{j-1}{2}a\big)_{\lambda_j}.
\end{equation}
For the proofs of above facts and additional details, we refer,
e.g.,  to \cite{FK}, \cite{FKKLR} and \cite{YLR}.

\begin{Lemma}\label{Le:2.1}{
Let $\Omega_i\subseteq\mathbb{C}^{d_i}$ be an irreducible bounded symmetric domain, and denote the
generic norm $N_{\Omega_i}$ and the genus $p_i$ for $\Omega_i$
$(1\leq i \leq k)$. For $z_i^0\in \Omega_i$, let $\phi_i$ be an
automorphism of $\Omega_i$ such that $\phi_i(z^0_i)=0$, $1\leq i\leq
k$. By \cite{WH}, the function
\begin{equation}\label{eq2.1}
   \psi(z_1,\ldots,z_k):=\prod_{i=1}^k\frac{N_{\Omega_i}(z_i^0,\overline{z_i^0})^{\frac{\mu_i}{2}}}{N_{\Omega_i}(z_i,\overline{z_i^0})^{\mu_i}}
\end{equation}
satisfies
\begin{equation}\label{eq2.2}
    |\psi(z_1,\ldots,z_k)|^2=\prod_{i=1}^k\Big(\frac{N_{\Omega_i}(\phi_i(z_i),\overline{\phi_i(z_i)})}{N_{\Omega_i}(z_i,\overline{z_i})}\Big)^{\mu_i}.
\end{equation}
Define the mapping
\begin{equation}\label{eq2.3}
    \begin{array}{rcl}
   F: \big(\prod\limits_{j=1}^k\Omega_j\big)^{{\mathbb{B}}^{d_0}}(\mu) & \longrightarrow   & \big(\prod\limits_{j=1}^k\Omega_j\big)^{{\mathbb{B}}^{d_0}}(\mu), \\
     (z_1,\ldots,z_k,w)            & \longmapsto   & (\phi_1(z_1),\ldots,\phi_k(z_k),\psi(z_1,\ldots,z_k)w).
  \end{array}
\end{equation}
Then $F$ is an isometric automorphism of
$\big(\big(\prod\limits_{j=1}^k\Omega_j\big)^{{\mathbb{B}}^{d_0}}(\mu),g(\mu;\nu)\big)$,
namely
\begin{equation}\label{eq2.4}
   \partial\overline{\partial}(\Phi(F(z_1,\ldots,z_k,w)))=\partial\overline{\partial}(\Phi(z_1,\ldots,z_k,w)),
\end{equation}
where $ \Phi(z,w):=-\sum_{j=1}^k\nu_j\ln
N_{\Omega_j}(z_i,\overline{z_i})^{\mu_j}
-\ln\left(\prod_{j=1}^kN_{\Omega_j}(z_j,\overline{z_j})^{\mu_j}-\|w\|^2\right)$
(see \eqref{eq1.2}).
 }\end{Lemma}

\begin{proof}[Proof]
It is easy to see that $F$ is an automorphism of
$\big(\prod\limits_{j=1}^k\Omega_j\big)^{{\mathbb{B}}^{d_0}}(\mu)$,
and
\begin{equation}\label{eq2.5}
    N_{\Omega_i}(\phi_i(z_i),\overline{\phi_i(z_i)})^{p_i}=J\phi_i(z_i)N_{\Omega_i}(z_i,\overline{z_i})^{p_i}\overline{J\phi_i(z_i)},
\end{equation}
where $J\phi_i(z_i)$ is the holomorphic Jacobian of the automorphism
$\phi_i$ of $\Omega_i$, $1\leq i\leq k$.

By \eqref{eq2.2} and \eqref{eq2.5}, we have
\begin{eqnarray*}
\Phi(F)
   &=& -\sum_{i=1}^k\nu_i\ln N_{\Omega_i}(\phi_i(z_i),\overline{\phi_i(z_i)})^{\mu_i}-\ln\left(\prod_{i=1}^kN_{\Omega_i}(\phi_i(z_i),\overline{\phi_i(z_i)})^{\mu_i}-\|\psi(z_1,\ldots,z_k)w\|^2\right)\\
   &=& -\sum_{i=1}^k\mu_i(1+\nu_i)\ln N_{\Omega_i}(\phi_i(z_i),\overline{\phi_i(z_i)})-\ln\left(1-\frac{\|w\|^2}{\prod_{i=1}^kN_{\Omega_i}(z_i,\overline{z_i})^{\mu_i}}\right)\\
\end{eqnarray*}
Note the formula \eqref{eq2.5} implies that
\begin{eqnarray*}
   & & -\sum_{i=1}^k\mu_i(1+\nu_i)\ln N_{\Omega_i}(\phi_i(z_i),\overline{\phi_i(z_i)})-\ln\left(1-\frac{\|w\|^2}{\prod_{i=1}^kN_{\Omega_i}(z_i,\overline{z_i})^{\mu_i}}\right)\\
   &=&-\sum_{i=1}^k\mu_i(1+\nu_i)\ln|J\phi_i(z_i)|^{\frac{2}{p_i}}-\sum_{j=1}^k\nu_j\ln N_{\Omega_j}(z_j,\overline{z_j})^{\mu_j}
-\ln\left(\prod_{j=1}^kN_{\Omega_j}(z_j,\overline{z_j})^{\mu_j}-\|w\|^2\right).\\
\end{eqnarray*}
Therefore, by the definition \eqref{eq1.2}, we obtain
$$\Phi(F)=-\sum_{i=1}^k\mu_i(1+\nu_i)\ln|J\phi_i(z_i)|^{\frac{2}{p_i}}+\Phi.$$
which implies \eqref{eq2.4} as $J\phi_i(z_i)$ $(1\leq i\leq k)$ is a holomorphic. We complete the proof.
\end{proof}
\begin{Lemma}\label{Le:2.2}{
Let $\Omega_i\subseteq\mathbb{C}^{d_i}$ be an irreducible bounded symmetric domain, and denote the
generic norm $N_{\Omega_i}(z_i,\overline{z_i})$, the dimension $d_i$
and the genus $p_i$ for $\Omega_i$ $(1\leq i \leq k)$. Then we have
\begin{equation}\label{eq2.6}
   (\partial\overline{\partial}\Phi)^n=\frac{\prod_{i=1}^k(\mu_i^{d_i}C_{d_i})}{(1-\|\widetilde{w}\|^2)^{d_0+1}}\prod_{i=1}^k\frac{\left(\nu_i+\frac{1}{1-\|\widetilde{w}\|^2}\right)^{d_i}}{N_{\Omega_i}(z_i,\overline{z_i})^{p_i+\mu_i d_0}}
   \left(\sum_{j=1}^n dZ_j\wedge d\overline{Z_j}\right)^n,
\end{equation}
where
 $$ \Phi(z,w):=-\sum_{j=1}^k\nu_j\ln N_{\Omega_j}(z_j,\overline{z_j})^{\mu_j} -\ln\left(\prod_{j=1}^kN_{\Omega_j}(z_j,\overline{z_j})^{\mu_j}-\|w\|^2\right),$$
 $$ C_{d_i}=\left.\det\left(-\frac{\partial^2\ln N_{\Omega_i}(z_i,\overline{z_i})}{\partial z_i^t\partial{\overline{z_i}}}\right)\right|_{z_i=0},\quad\widetilde{w}=\frac{w}{\prod_{j=1}^kN_{\Omega_j}(z_j,\overline{z_j})^{\frac{\mu_j}{2}}},$$
$$n=\sum_{j=0}^kd_j,\;\;  Z=(Z_1,\ldots,Z_n)=(z_1,\ldots,z_k,w).$$
 }\end{Lemma}

\begin{proof}[Proof]
It is generally known that
\begin{equation}\label{eq2.7}
    \frac{(\frac{\sqrt{-1}}{2\pi}\partial\overline{\partial}\Phi)^n}{n!}=\det\left(\frac{\partial^2\Phi}{\partial Z^t\partial
   \overline{Z}}\right)\frac{\omega_0^n}{n!},
\end{equation}
where $\omega_0=\frac{\sqrt{-1}}{2\pi}\sum_{j=1}^ndZ_j\wedge
d\overline{Z_j}$, $\frac{\partial}{\partial
Z^t}=(\frac{\partial}{\partial Z_1},\frac{\partial}{\partial
Z_2},\ldots,\frac{\partial}{\partial Z_n})^t$,
$\frac{\partial}{\partial \overline{Z}}=(\frac{\partial}{\partial
\overline{Z_1}},\frac{\partial}{\partial
\overline{Z_2}},\ldots,\frac{\partial}{\partial \overline{Z_n}})$
and $\frac{\partial^2}{\partial Z^t\partial
\overline{Z}}=\frac{\partial}{\partial Z^t}\frac{\partial}{\partial
\overline{Z}}$.

Note the formula \eqref{eq2.4} implies that
\begin{equation}\label{eq2.8}
\det\left(\frac{\partial^2\Phi(F)}{\partial Z^t\partial
   \overline{Z}}\right)=\det\left(\frac{\partial^2\Phi}{\partial Z^t\partial
   \overline{Z}}\right).
\end{equation}
By the identity
\begin{equation}\label{eq2.9}
\frac{\partial^2\Phi(F)}{\partial Z^t\partial
   \overline{Z}}=\frac{\partial F}{\partial Z^t}\frac{\partial^2\Phi}{\partial Z^t\partial
   \overline{Z}}(F(Z))\overline{\left(\frac{\partial F}{\partial Z^t}\right)}^t
\end{equation}
 and \eqref{eq2.8}, we obtain
\begin{equation}\label{eq2.10}
\det\left(\frac{\partial^2\Phi}{\partial Z^t\partial
   \overline{Z}}\right)(Z)=|JF(Z)|^2\det\left(\frac{\partial^2\Phi}{\partial Z^t\partial
   \overline{Z}}\right)(F(Z)),
\end{equation}
where
\begin{equation*}
F:=(F_1,F_2,\ldots,F_n),~~\frac{\partial F}{\partial
Z^t}:=(\frac{\partial F_1}{\partial
   Z^t},\frac{\partial F_2}{\partial
   Z^t},\ldots,\frac{\partial F_n}{\partial
   Z^t})
\end{equation*}
and
\begin{equation*}\label{eq2.11}
   JF(Z):=\det\left(\frac{\partial F}{\partial
   Z^t}\right)(Z).
\end{equation*}

Let $Z^0=(z_1^0,\ldots,z_k^0,w^0)\in
\left(\prod_{j=1}^k\Omega_j\right)^{{\mathbb{B}}^{d_0}}(\mu)$,
$\widetilde{Z^0}:=(\widetilde{z_1^0},\ldots,\widetilde{z_k^0},\widetilde{w^0})=F(Z^0)$.
By \eqref{eq2.3}, we have
$$\widetilde{Z^0}=\left(0,\ldots,0,\frac{w^0}{\prod_{i=1}^kN_{\Omega_i}(z_i^0,\overline{z_i^0})^{\frac{\mu_i}{2}}}\right)$$
and
\begin{equation}\label{eq2.12}
 |JF(Z^0)|^2=\prod_{i=1}^k|J\phi_i(z_i^0)|^2\cdot|\psi(z_1^0,\ldots,z_k^0)|^{2d_0}.
\end{equation}
Using $N_{\Omega_i}(0,z_i)=1$ and \eqref{eq2.5}, we can see that
$$|J\phi_i(z_i^0)|^2=N_{\Omega_i}(z_i^0,\overline{z_i^0})^{-p_i}$$
From \eqref{eq2.2}, \eqref{eq2.10} and \eqref{eq2.12}, we have
\begin{equation}\label{eq2.13}
    |JF(Z^0)|^2=\prod_{i=1}^k\frac{1}{N_{\Omega_i}(z_i^0,\overline{z_i^0})^{p_i+\mu_i d_0}},
\end{equation}
and
\begin{equation}\label{eq2.13}
\det\left(\frac{\partial^2\Phi}{\partial Z^t\partial
   \overline{Z}}\right)(Z^0)=\prod_{i=1}^k\frac{1}{N_{\Omega_i}(z_i^0,\overline{z_i^0})^{p_i+\mu_i d_0}}\det\left(\frac{\partial^2\Phi}{\partial Z^t\partial
   \overline{Z}}\right)(\widetilde{Z^0}).
\end{equation}

A direct calculation gives
 \begin{eqnarray}
\nonumber    & & \frac{\partial^2\Phi}{\partial Z^t\partial\overline{Z}}(0,\ldots,0,w) \\
\label{eq2.24}    &=& \left(
                                                            \begin{array}{cccc}
                                                             \mu_1 \left(\nu_1+\frac{1}{1-\|w\|^2}\right)C_{d_1}  &\cdots & 0      & 0 \\
                                                              \vdots                          &\cdots & \vdots & \vdots \\
                                                              0                               &\cdots & \mu_k\left(\nu_k+\frac{1}{1-\|w\|^2}\right)C_{d_k} & 0 \\
                                                              0  &\cdots & 0 & \frac{1}{1-\|w\|^2}I_{d_0}+\frac{1}{(1-\|w\|^2)^2}\overline{w}^{t}w \\
                                                            \end{array}
                                                          \right),
 \end{eqnarray}
where $I_{d_0}$ denotes  the $d_0\times d_0$ identity matrix,
$\overline{w}^{t}$ is  the complex conjugate transpose of the row
vector $w=(w_1,w_2,\cdots,w_{d_0})$, and
$C_{d_i}=-\left.\frac{\partial^2\ln N_{\Omega_i}}{\partial
z_i^t\partial\overline{z_i}}\right|_{z_i=0}$.

From \eqref{eq2.24}, we have
\begin{equation}\label{eq2.25}
    \det\left(\frac{\partial^2\Phi}{\partial Z^t\partial
   \overline{Z}}\right)(0,\ldots,0,w)=\frac{\prod_{i=1}^k\left(\mu_i^{d_i}\det C_{d_i}\left(\nu_i+\frac{1}{1-\|w\|^2}\right)^{d_i}\right) }{(1-\|w\|^2)^{d_0+1}}.
\end{equation}
Finally, combining \eqref{eq2.13} and \eqref{eq2.25}, we have
\eqref{eq2.6}. The proof is finished.
\end{proof}

\begin{Theorem}\label{Th:2.3}{Let $\Omega_i\subseteq \mathbb{C}^{d_i}$ be an irreducible bounded symmetric domain, and denote the
generic norm $N_{\Omega_i}$,  the genus $p_i$, the dimension $d_i$
and the Hua polynomial $\chi_i$ (see \eqref{1.3}) for $\Omega_i$
$(1\leq i \leq k)$. Endow the generalized Cartan-Hartogs domain
$\big(\prod_{j=1}^k\Omega_j\big)^{{\mathbb{B}}^{d_0}}(\mu)$ with the canonical metric $g(\mu;\nu)$. For
$\alpha>\max\left\{n,\frac{p_1-1}{\mu_1(1+\nu_1)},\ldots,\frac{p_k-1}{\mu_k(1+\nu_k)}\right\}$,
then the reproducing kernel $K_{\alpha}(Z;\overline{Z})$ of the weighted
 Hilbert space
$$\mathcal{H}_{\alpha}=\left\{ f\in \mbox{\rm Hol}\big(\big(\prod_{j=1}^k\Omega_j\big)^{{\mathbb{B}}^{d_0}}(\mu)\big):
 \int_{\left(\prod_{j=1}^k\Omega_j\right)^{{\mathbb{B}}^{d_0}}(\mu)}|f|^2\exp\{-\alpha \Phi\}
\frac{ \omega(\mu;\nu)^{n}}{n!}<+\infty\right\}$$ can be expressed as
\begin{equation}\label{eq2.26}
  K_{\alpha}(Z;\overline{Z})=\frac{(\alpha-n)_n}{{\prod_{i=1}^kN_{\Omega_i}(z_i,\overline{z_i})^{\mu_i(1+\nu_i)\alpha}}}
\sum_{t=0}^{+\infty}\psi(\alpha,t)\frac{(\alpha)_t}{t!}\|\widetilde{w}\|^{2t}.
\end{equation}
Where
\begin{equation}\label{e4.5.4}
    Z=(z_1,\ldots,z_k,w),\; d=\sum_{j=1}^kd_j,\;n=d+d_0,\quad
\widetilde{w}=\frac{w}{\prod_{j=1}^kN_{\Omega_j}(z_j,\overline{z_j})^{\frac{\mu_j}{2}}},
\end{equation}
\begin{equation}\label{e4.5.2}
    \psi(x,y)=\frac{\prod_{i=1}^k\chi_i(\mu_i((1+\nu_i)x+y)-p_i)}{\prod_{i=1}^k\mu_i^{d_i}\sum_{t=0}^d\sigma(t)(x-n)_{d-t}(x+y-t)_t},
\end{equation}
and
\begin{equation*}
   \sigma(t)=\sum_{\sum_{i=1}^kt_i=d-t\atop
t_i\geq 0,1\leq i\leq k}\prod_{i=1}^k{d_i\choose t_i}\nu_i^{t_i}.
\end{equation*}

}\end{Theorem}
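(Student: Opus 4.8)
The plan is to compute $K_\alpha(Z,\overline Z)$ directly by decomposing $\mathcal H_\alpha$ according to the Hartogs structure of the domain and integrating against the explicit volume form of Lemma~\ref{Le:2.2}. Every $f\in\mathcal H_\alpha$ has a Hartogs expansion $f(z,w)=\sum_{\beta\in\mathbb N^{d_0}}f_\beta(z)\,w^\beta$ in the fibre variable, with each $f_\beta$ holomorphic on $\prod_j\Omega_j$. Since $\exp\{-\alpha\Phi\}$ and $\omega(\mu;\nu)^n/n!$ depend on $w$ only through $\|w\|^2$, the measure is invariant under $w\mapsto Uw$ ($U\in U(d_0)$); hence the blocks $\mathcal H_\alpha^{\beta}:=\{P(z)w^\beta\}$ are mutually orthogonal and, by the Hartogs expansion, $\mathcal H_\alpha=\bigoplus_\beta\mathcal H_\alpha^{\beta}$. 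I would then use that the reproducing kernel of an orthogonal sum is the sum of the block kernels. (As a cross-check, the isometric automorphism $F$ of Lemma~\ref{Le:2.1} reduces the diagonal kernel to the slice $z=0$.)

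Next I would write the measure explicitly: combining $\exp\{-\alpha\Phi\}=\prod_jN_{\Omega_j}^{\alpha\mu_j(1+\nu_j)}(1-\|\widetilde w\|^2)^{\alpha}$ with Lemma~\ref{Le:2.2} gives
$$\exp\{-\alpha\Phi\}\frac{\omega^n}{n!}=\frac{\prod_i\mu_i^{d_i}C_{\Omega_i}}{\pi^n}\prod_jN_{\Omega_j}^{\alpha\mu_j(1+\nu_j)-p_j-\mu_jd_0}(1-\|\widetilde w\|^2)^{\alpha-d_0-1}\prod_i\Big(\nu_i+\frac{1}{1-\|\widetilde w\|^2}\Big)^{d_i}dm(Z),$$
with $dm$ the Lebesgue measure. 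For fixed $z$ I would substitute $w=\big(\prod_jN_{\Omega_j}^{\mu_j/2}\big)\widetilde w$, mapping the fibre onto $\mathbb B^{d_0}$ and decoupling the $\widetilde w$-integral from $z$; the $U(d_0)$-symmetry kills all cross terms $\beta\neq\beta'$, and for $|\beta|=t$ the surviving $z$-powers collapse to $\prod_jN_{\Omega_j}^{s_j(\beta)}$ with $s_j(\beta)=\mu_j(\alpha(1+\nu_j)+t)-p_j$. Thus the block inner product factors as a constant times $C(\beta,\alpha)\int_{\prod\Omega_j}P\overline Q\prod_jN_{\Omega_j}^{s_j(\beta)}\,dm$, the weighted Bergman inner product on the base with weight $\prod_jN_{\Omega_j}^{s_j(\beta)}$, where $C(\beta,\alpha)$ is the scalar fibre integral.

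Two standard computations supply the ingredients. On the base, the weighted Bergman kernel for weight $N_{\Omega_j}^{s}$ ($s>-1$) equals $\tfrac{C_{\Omega_j}\chi_j(s)}{\pi^{d_j}}N_{\Omega_j}^{-(p_j+s)}$, which follows from the Hua--Faraut--Kor\'anyi formula \eqref{1.7} and the Hua integral \eqref{1.2}; multiplying over $j$ and using $p_j+s_j(\beta)=\mu_j(\alpha(1+\nu_j)+t)$ gives the base diagonal kernel $k_\beta(z,\overline z)$. For the fibre scalar I would expand $\prod_i\big(\nu_i+(1-\|\widetilde w\|^2)^{-1}\big)^{d_i}=\sum_{t'=0}^d\sigma(t')(1-\|\widetilde w\|^2)^{-t'}$, which is exactly \eqref{f1}, and integrate termwise via $\int_{\mathbb B^{d_0}}|\widetilde w^\beta|^2(1-\|\widetilde w\|^2)^{\gamma}dm=\pi^{d_0}\beta!\,\Gamma(\gamma+1)/\Gamma(d_0+|\beta|+\gamma+1)$. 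Convergence of this integral needs $\alpha>n=d+d_0$, while $s_j(\beta)>-1$ (finiteness of the Hua integral and nonvanishing of $\chi_j(s_j)$) needs $\alpha>(p_j-1)/(\mu_j(1+\nu_j))$ --- exactly the stated hypotheses.

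Finally I would assemble $K_\alpha(Z,\overline Z)=\sum_\beta A_\beta^{-1}k_\beta(z,\overline z)\,|w^\beta|^2$, where $A_\beta=\tfrac{\prod_i\mu_i^{d_i}C_{\Omega_i}}{\pi^n}C(\beta,\alpha)$ is the block inner-product constant, group the terms with $|\beta|=t$, apply the multinomial identity $\sum_{|\beta|=t}|w^\beta|^2/\beta!=\|w\|^{2t}/t!$, and use $\|w\|^{2t}\prod_jN_{\Omega_j}^{-\mu_j t}=\|\widetilde w\|^{2t}$ to obtain a single power series in $\|\widetilde w\|^2$. The main obstacle is the concluding bookkeeping: one must convert the Gamma-quotients from $C(\beta,\alpha)$ into Pochhammer symbols via $\Gamma(\alpha-d_0-t')/\Gamma(\alpha+t-t')=\big((\alpha-n)_n(\alpha)_t\big)^{-1}(\alpha-n)_{d-t'}(\alpha+t-t')_{t'}$, and verify that the prefactors $C_{\Omega_i}$, the powers of $\pi$, and $\prod_i\mu_i^{d_i}$ recombine so that the coefficient of $\|\widetilde w\|^{2t}$ becomes exactly $(\alpha-n)_n\,\psi(\alpha,t)(\alpha)_t/t!$, matching \eqref{eq2.26}. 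This step is elementary but delicate, and it is where the precise forms of $\psi$ and $\sigma$ get pinned down.
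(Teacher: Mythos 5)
Your proposal is correct, and it reaches \eqref{eq2.26} by a genuinely different decomposition from the paper's. The paper runs the Peter--Weyl machinery in all variables at once: $\mathcal{H}_\alpha$ is decomposed under $\mathcal{K}_0\times\cdots\times\mathcal{K}_k$ into blocks $\mathcal{P}^{(0)}_{\lambda_0}\otimes\cdots\otimes\mathcal{P}^{(k)}_{\lambda_k}$ (Schur's lemma plus the Faraut--Thomas theorem supply the orthogonal decomposition), each block norm is computed from \eqref{eq} and the Hua integral \eqref{1.2}, and the sum over the base partitions $\lambda_1,\ldots,\lambda_k$ is then collapsed back via the Hua--Faraut--Kor\'anyi formula \eqref{1.7}, leaving a single series in $\lambda_0$. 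You instead decompose only in the fibre variable, into monomial blocks $P(z)w^\beta$ (strictly it is the torus inside $U(d_0)$ that kills the cross terms $\beta\neq\beta'$), and you import as a known fact the closed form $\frac{C_{\Omega_j}\chi_j(s)}{\pi^{d_j}}N_{\Omega_j}(z_j,\overline{z_j})^{-(p_j+s)}$ of the weighted Bergman kernel on the base --- which encapsulates exactly the Schur/HFK resummation that the paper performs by hand. From there the computational core coincides with the paper's: the binomial expansion of $\prod_i\big(\nu_i+(1-\|\widetilde{w}\|^2)^{-1}\big)^{d_i}$ producing $\sigma$ as in \eqref{f1}, the Forelli--Rudin-type fibre integral, and the Gamma-to-Pochhammer bookkeeping. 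I checked your identity $\Gamma(\alpha-d_0-t')/\Gamma(\alpha+t-t')=\big((\alpha-n)_n(\alpha)_t\big)^{-1}(\alpha-n)_{d-t'}(\alpha+t-t')_{t'}$ (it follows from $n=d+d_0$), and with it the constants $C_{\Omega_i}$ and the powers of $\pi$ cancel so that the coefficient of $\|\widetilde{w}\|^{2t}$ is exactly $(\alpha-n)_n\,\psi(\alpha,t)(\alpha)_t/t!$; the bookkeeping you deferred does close, under precisely the stated hypotheses ($\alpha>n$ handles the worst fibre exponent $t'=d$, and $\alpha>(p_j-1)/(\mu_j(1+\nu_j))$ handles the base weights at $t=0$). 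What each route buys: yours is shorter and more elementary in the fibre direction, since completeness of the monomial decomposition needs only torus averaging of the Hartogs series plus monotone convergence --- do state this Parseval step explicitly, as it is your replacement for the paper's appeal to Faraut--Thomas --- whereas the paper's version is self-contained, proving the base kernel along the way rather than citing it.
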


\begin{proof}[Proof]
Firstly, by definition of $C_{d_i}$ and $C_{\Omega_i}$, we know that $C_{d_i}=C_{\Omega_i}$. Therefore, according to the formula \eqref{eq2.6}, we can express the inner product on
$\mathcal{H}_{\alpha}$ as follows
\begin{eqnarray}
\nonumber (f,g)  &=& \frac{\prod_{i=1}^k(\mu_i^{d_i}C_{\Omega_i})}{\pi^{n}}\int_{\left(\prod\limits_{j=1}^k\Omega_j\right)^{{\mathbb{B}}^{d_0}}(\mu)}f(Z)\overline{g(Z)} \prod_{i=1}^kN_{\Omega_i}(z_i,\overline{z_i})^{\mu_i((1+\nu_i)\alpha-d_0)-p_i}\\
\nonumber   & &
\times\Big(1-\|\widetilde{w}\|^2
\Big)^{\alpha-d_0-1}\prod_{i=1}^k\left(\nu_i+\frac{1}{1-\|\widetilde{w}\|^2}\right)^{d_i}dm(Z),
\end{eqnarray}
where $dm$ denotes the standard Euclidean measure.

Secondly, for the sake of convenience, we define $\Omega_0:={\mathbb{B}}^{d_0}$, $z_0:=w$.
Let $r_i$, $a_i, b_i$, $d_i$, $p_i$, $ \chi_i$, $(s)_\mathbf{\lambda}^{(i)}$, $N_{\Omega_i}$ and $V(\Omega_i)$ be rank, characteristic multiplicities,
the dimension, genus, Hua polynomial, generalized Pochhammer symbol, generic norm and the Euclidean volume of the irreducible bounded symmetric domain $\Omega_i$ ($0\leq
i\leq k$).

Let $\mathcal{G}_i$ be the identity connected components of
 groups of biholomorphic self-maps of $\Omega_i \subset \mathbb{C}^{d_i}$, and $\mathcal{K}_i$
be the stabilizers of the origin in $\mathcal{G}_i$, respectively. For
any $u=(u_0,\ldots,u_k)\in \mathcal{K}:= \mathcal{K}_0\times\ldots\times\mathcal{K}_k$,
we define the action
 $$ \pi(u)f(z_1,\ldots,z_k,w)\equiv f\circ u(z_1,\ldots,z_k,w):=f(u_1\circ z_1,\ldots,u_k\circ z_k,u_0\circ w)$$
 of $\mathcal{K}$, then the space $\mathcal{P}$
of holomorphic polynomials on $\prod_{j=0}^k\mathbb{C}^{d_j}$ admits
the Peter-Weyl decomposition
$$\mathcal{P}=\bigoplus_{{\ell(\lambda_i)\leq r_i\atop 0\leq i\leq k}}\mathcal{P}^{(0)}_{\lambda_0}\otimes \ldots\otimes \mathcal{P}^{(k)}_{\lambda_k},$$
where space $\mathcal{P}^{(i)}_{\lambda_i}$ is
$\mathcal{K}_i$-invariant and irreducible subspace of the space
 of holomorphic polynomials on $\mathbb{C}^{d_i}$, and $\ell(\lambda_i)$ denotes the length of partition $\lambda_i$ $(0\leq i \leq k)$.

Since $\mathcal{H}_{\alpha}$ is invariant under the action of
$\mathcal{K}$, that is, $\forall u\in
\mathcal{K}$, $(\pi(u)f,\pi(u)g)=(f,g)$,
 $\mathcal{H}_{\alpha}$ admits
an irreducible decomposition (see \cite{Far-Tho})
$$\mathcal{H}_{\alpha}=\widehat{\bigoplus_{{\ell(\lambda_i)\leq r_i\atop 0\leq i\leq k}}}\mathcal{P}^{(0)}_{\lambda_0}\otimes \ldots\otimes \mathcal{P}^{(k)}_{\lambda_k},$$
where $\widehat{\bigoplus}$ denotes the orthogonal direct sum.

For given partition $\lambda_i$ of length $\leq r_i$, let
$K^{(i)}_{\lambda_i}(z_i;\overline{z_i})$ be the reproducing kernel
of $\mathcal{P}^{(i)}_{\lambda_i}$ with respect to \eqref{1.4}. By
Schur's lemma, there exists a positive constant
$c_{\lambda_0\ldots\lambda_k}$ such that
$c_{\lambda_0\ldots\lambda_k}\prod_{j=0}^kK^{(j)}_{\lambda_j}(z_j;\overline{z_j})$
is the reproducing kernel of $\mathcal{P}^{(0)}_{\lambda_0}\otimes
\ldots\otimes \mathcal{P}^{(k)}_{\lambda_k}$ with respect to the
above inner product $(\cdot,\cdot)$.

It is well known that the reproducing kernel can be expressed as a sum of square of the modules of the orthogonal basis, therefore we obtain
\begin{eqnarray*}
   & & \frac{\prod_{i=1}^k(\mu_i^{d_i}C_{\Omega_i})}{\pi^{n}}\int_{\left(\prod\limits_{j=1}^k\Omega_j\right)^{{\mathbb{B}}^{d_0}}(\mu)}c_{\lambda_0\ldots\lambda_k}\prod_{j=0}^kK^{(j)}_{\lambda_j}(z_j;\overline{z_j}) \prod_{i=1}^kN_{\Omega_i}(z_i,\overline{z_i})^{\mu_i((1+\nu_i)\alpha-d_0)-p_i}\\
   & &\times\Big(1-\|\widetilde{w}\|^2
\Big)^{\alpha-d_0-1}\prod_{i=1}^k\left(\nu_i+\frac{1}{1-\|\widetilde{w}\|^2}\right)^{d_i}\prod_{j=0}^kdm(z_j)  \\
   &=&\prod_{i=0}^k\dim\mathcal{P}^{(i)}_{\lambda_i}.
\end{eqnarray*}
It follows
\begin{equation}\label{e4.4}
  K_{\alpha}(Z;\overline{Z})=\sum_{{\ell(\lambda_i)\leq r_i\atop 0\leq
    i\leq k}}c_{\lambda_0\ldots\lambda_k}\prod_{j=0}^kK^{(j)}_{\lambda_j}(z_j;\overline{z_j})=\sum_{{\ell(\lambda_i)\leq r_i\atop 0\leq
    i\leq k}}\frac{\prod_{i=0}^k\dim\mathcal{P}^{(i)}_{\lambda_i}}{<\prod_{j=0}^kK^{(j)}_{\lambda_j}(z_j;\overline{z_j})>}\prod_{j=0}^kK^{(j)}_{\lambda_j}(z_j;\overline{z_j}),
\end{equation}
where $<f>$ denotes integral
\begin{eqnarray}
\nonumber <f>  &=& \frac{\prod_{i=1}^k(\mu_i^{d_i}C_{\Omega_i})}{\pi^{n}}\int_{\left(\prod_{j=1}^k\Omega_j\right)^{{\mathbb{B}}^{d_0}}(\mu)}f(Z)\times\prod_{i=1}^kN_{\Omega_i}(z_i,\overline{z_i})^{\mu_i((1+\nu_i)\alpha-d_0)-p_i} \\
\nonumber   & &
\times\Big(1-\|\widetilde{w}\|^2
\Big)^{\alpha-d_0-1}\prod_{i=1}^k\left(\nu_i+\frac{1}{1-\|\widetilde{w}\|^2}\right)^{d_i}\prod_{j=0}^kdm(z_j).
\end{eqnarray}

In order to compute the $K_{\alpha}(Z;\overline{Z})$, we just need to calculate the value of $ <\prod_{j=0}^kK^{(j)}_{\lambda_j}(z_j;\overline{z_j})>$. Since $\alpha>\max\{n,\frac{p_1-1}{\mu_1(1+\nu_1)},\ldots,\frac{p_k-1}{\mu_k(1+\nu_k)}\}$, then $\mu_i(1+\nu_i)\alpha-p_i>-1$ and $\alpha-n-1>-1$. Hence we can use (see \cite{F})
\begin{equation}\label{eq}
    \int_{\Omega}K_{\lambda}(z,\overline{z})N_{\Omega}(z,\overline{z})^sdm(z)=\frac{\dim \mathcal{P}_{\lambda}}{(p+s)_{\lambda}}
    \int_{\Omega}N_{\Omega}(z,\overline{z})^sdm(z)\;\;\;(s>-1)
\end{equation}
 to obtain
\begin{eqnarray*}
\nonumber   & & <\prod_{j=0}^kK^{(j)}_{\lambda_j}(z_j;\overline{z_j})> \\
\nonumber   &=&
\frac{\prod_{i=1}^k(\mu_i^{d_i}C_{\Omega_i})}{\pi^{n}}\prod_{i=1}^k\int_{\Omega_i}K^{(i)}_{\lambda_i}(z_i;\overline{z_i})N_{\Omega_i}(z_i,\overline{z_i})
^{\mu_i((1+\nu_i)\alpha+\lambda_0)-p_i}dm(z_i)\\
\nonumber   & &
\times \int_{{\mathbb{B}}^{d_0}} K^{(0)}_{\lambda_0}(w;\overline{w})(1-\|w\|^2)^{\alpha-d_0-1}\prod_{i=1}^k\left(\nu_i+\frac{1}{1-\|w\|^2}\right)^{d_i}dm(w)\\
\nonumber &=&
\frac{\prod_{i=1}^k(\mu_i^{d_i}C_{\Omega_i})}{\pi^{n}}\frac{\prod_{i=1}^k\dim\mathcal{P}^{(i)}_{\lambda_i}}{\prod_{i=1}^k(\mu_i((1+\nu_i)\alpha+\lambda_0))_{\lambda_i}^{(i)}}
\prod_{i=1}^k\int_{\Omega_i}N_{\Omega_i}(z_i,\overline{z_i})^{\mu_i((1+\nu_i)\alpha+\lambda_0)-p_i}
dm(z_i)\\
\nonumber   & &
\times \int_{{\mathbb{B}}^{d_0}} K^{(0)}_{\lambda_0}(w;\overline{w})(1-\|w\|^2)^{\alpha-d_0-1}\prod_{i=1}^k\left(\nu_i+\frac{1}{1-\|w\|^2}\right)^{d_i}dm(w).\\
\end{eqnarray*}
By \eqref{1.2}, we can simplify the above equation as follows
\begin{eqnarray}
\nonumber   & & <\prod_{j=0}^kK^{(j)}_{\lambda_j}(z_j;\overline{z_j})> \\
\nonumber &=&\frac{\prod_{i=1}^k(\mu_i^{d_i}\pi^{d_i})\cdot
}{\pi^{n}\prod_{i=1}^k\chi_i(\mu_i((1+\nu_i)\alpha+\lambda_0)-p_i)}
\frac{\prod_{i=1}^k\dim\mathcal{P}^{(i)}_{\lambda_i}}{\prod_{i=1}^k(\mu_i((1+\nu_i)\alpha+\lambda_0))_{\lambda_i}^{(i)}}\\
\label{e4.5}   & & \times
\underbrace{\int_{{\mathbb{B}}^{d_0}}K^{(0)}_{\lambda_0}(w;\overline{w})(1-\|w\|^2)^{\alpha-d_0-1}\prod_{i=1}^k\left(\nu_i+\frac{1}{1-\|w\|^2}\right)^{d_i}dm(w)}\limits_{\textcircled{1}}.
\end{eqnarray}
Since $\Omega_0=\mathbb{B}^{d_0}$, hence we have (\text{refer
to} \cite{Hua})
$$N_{{\mathbb{B}}^{d_0}}(w,\overline{w})=1-\Vert w\Vert^2,\;p_0=d_0+1,\;\chi_0(x)=(x+1)_{d_0},\; C_{\mathbb{B}^{d_0}}=1.$$
It follows
\begin{eqnarray*}
\nonumber \textcircled{1}  &=&\sum_{t_1=0}^{d_1}\cdots\sum_{t_k=0}^{d_k}
\left(\prod_{i=1}^k{d_i\choose t_i}\nu_i^{t_i}\right)
\int_{{\mathbb{B}}^{d_0}}
K^{(0)}_{\lambda_0}(w;\overline{w})N_{{\mathbb{B}}^{d_0}}(w,\overline{w})^{\alpha-n-1+\sum_{j=1}^kt_j}dm(w) \\
\nonumber &=&\sum_{t=0}^d\left(\sum_{\sum_{i=1}^kt_i=t\atop
t_i\geq 0,1\leq i\leq k}\prod_{i=1}^k{d_i\choose
t_i}\nu_i^{t_i}\right)\frac{\dim
\mathcal{P}^{(0)}_{\lambda_0}}{(\alpha-d+t)_{\lambda_0}}\int_{{\mathbb{B}}^{d_0}}
N_{{\mathbb{B}}^{d_0}}(w,\overline{w})^{\alpha-n-1+t}dm(w)\\
\end{eqnarray*}
by \eqref{eq}. Applying the formula \eqref{1.2} again, we know that
\begin{eqnarray*}
\nonumber \textcircled{1} &=&\sum_{t=0}^d\left(\sum_{\sum_{i=1}^kt_i=t\atop
t_i\geq 0,1\leq i\leq k}\prod_{i=1}^k{d_i\choose
t_i}\nu_i^{t_i}\right)\frac{\dim
\mathcal{P}^{(0)}_{\lambda_0}}{(\alpha-d+t)_{\lambda_0}}\frac{\pi^{d_0}}{\chi_0(\alpha-n-1+t)}.\\
&=&\sum_{t=0}^d\left(\sum_{\sum_{i=1}^kt_i=t\atop
t_i\geq 0,1\leq i\leq k}\prod_{i=1}^k{d_i\choose
t_i}\nu_i^{t_i}\right)\frac{\dim
\mathcal{P}^{(0)}_{\lambda_0}}{(\alpha-d+t)_{\lambda_0}}\frac{\pi^{d_0}}{(\alpha-n+t)_{d_0}}.
\end{eqnarray*}

By the formula \eqref{s}, we have
\begin{eqnarray*}
\nonumber  \textcircled{1} &=&\dim
\mathcal{P}_{\lambda_0}^{(0)}\pi^{d_0}\sum_{t=0}^d\left(\sum_{\sum_{i=1}^kt_i=t\atop
t_i\geq 0,1\leq i\leq k}\prod_{i=1}^k{d_i\choose
t_i}\nu_i^{t_i}\right)\frac{\Gamma(\alpha-d+t)}{\Gamma(\alpha-d+t+\lambda_0)}\frac{\Gamma(\alpha-n+t)}{\Gamma(\alpha-n+t+d_0)}
\end{eqnarray*}
Since $n=d+d_0$, it follows
\begin{eqnarray*}
\nonumber\textcircled{1} &=&
\dim
\mathcal{P}_{\lambda_0}^{(0)}\pi^{d_0}\sum_{t=0}^d\left(\sum_{\sum_{i=1}^kt_i=t\atop
t_i\geq 0,1\leq i\leq k}\prod_{i=1}^k{d_i\choose
t_i}\nu_i^{t_i}\right)\frac{\Gamma(\alpha+\lambda_0)}{\Gamma(\alpha-d+t+\lambda_0)}\frac{\Gamma(\alpha-n+t)}{\Gamma(\alpha-n)}\frac{\Gamma(\alpha-n)}{\Gamma(\alpha+\lambda_0)}\\
\nonumber &=&\dim
\mathcal{P}_{\lambda_0}^{(0)}\pi^{d_0}\sum_{t=0}^d\left(\sum_{\sum_{i=1}^kt_i=t\atop
t_i\geq 0,1\leq i\leq k}\prod_{i=1}^k{d_i\choose
t_i}\nu_i^{t_i}\right)\frac{\big(\alpha+\lambda_0-(d-t)\big)_{d-t}\big(\alpha-n\big)_t}{\big(\alpha-n\big)_{n+\lambda_0}}\\
&=&\frac{\pi^{d_0}\dim
\mathcal{P}_{\lambda_0}^{(0)}}{\big(\alpha-n\big)_{n+\lambda_0}}\sum_{t=0}^d\sigma(t)\big(\alpha+\lambda_0-t\big)_{t}\big(\alpha-n\big)_{d-t}.
\end{eqnarray*}
Therefore, the formulas \eqref{e4.5}, \eqref{e4.5.2} and a direct computation imply
\begin{eqnarray}
\nonumber & &    <\prod_{j=0}^kK^{(j)}_{\lambda_j}(z_j;\overline{z_j})>\\
\nonumber &=&\frac{\prod_{i=1}^k\mu_i^{d_i}\prod_{i=0}^k\dim\mathcal{P}^{(i)}_{\lambda_i}\sum_{t=0}^d\sigma(t)\big(\alpha+\lambda_0-t\big)_{t}
\big(\alpha-n\big)_{d-t}}{\big(\alpha-n\big)_{n+\lambda_0}\prod_{i=1}^k\chi_i(\mu_i((1+\nu_i)\alpha+\lambda_0)-p_i)\prod_{i=1}^k(\mu_i((1+\nu_i)\alpha+\lambda_0))_{\lambda_i}^{(i)}}\\
 \nonumber &=&\frac{\prod_{i=0}^k\dim\mathcal{P}^{(i)}_{\lambda_i}}{\psi(\alpha,\lambda_0)(\alpha-n)_{n+\lambda_0}\prod_{i=1}^k(\mu_i((1+\nu_i)\alpha+\lambda_0))_{\lambda_i}^{(i)}}.
\end{eqnarray}

Hence, combining  \eqref{e4.4} and \eqref{1.7}, we have
\begin{eqnarray*}
\nonumber   & & K_{\alpha}(Z;\overline{Z}) \\
\nonumber    &=&
  \sum_{{\ell(\lambda_i)\leq r_i\atop 0\leq i\leq k}}
  \psi(\alpha,\lambda_0)(\alpha-n)_{n+\lambda_0}K^{(0)}_{\lambda_0}(w;\overline{w})\prod_{i=1}^k(\mu_i((1+\nu_i)\alpha+\lambda_0))_{\lambda_i}^{(i)}K^{(j)}_{\lambda_j}(z_j;\overline{z_j})
      \\
\nonumber   &=&
\sum_{{\ell(\lambda_0)\leq r_0}}\prod_{i=1}^k \frac{1}{N_{\Omega_i}(z_i,\overline{z_i})^{\mu_i((1+\nu_i)\alpha+\lambda_0)}} \psi(\alpha,\lambda_0)(\alpha-n)_{n+\lambda_0}K^{(0)}_{\lambda_0}(w;\overline{w})   \\
\end{eqnarray*}
Since $r_0=1$, $\lambda_0\in \mathbb{N}$ and $K^{(0)}_{\lambda_0}(w;\overline{w})$ be the reproducing kernel
of $\mathcal{P}^{(0)}_{\lambda_0}$ with respect to \eqref{1.4} where $\mathcal{P}^{(0)}_{\lambda_0}$ is the space of homogeneous holomorphic
polynomials of degree $\lambda_0$, Hence we have
$$K^{(0)}_{\lambda_0}(w;\overline{w})=\sum\limits_{\vert \alpha\vert=\lambda_0}\frac{\vert w\vert^{2\alpha}}{\prod\limits_{i=1}^{d_0}\Gamma(\alpha_i+1)}=\frac{\|w\|^{2\lambda_0}}{\lambda_0!}.$$
Consequently, we obtain
\begin{eqnarray}
\nonumber & &  K_{\alpha}(Z;\overline{Z})\\
\nonumber   &=&
\sum_{{\lambda_0=0}}^{+\infty}\prod_{i=1}^k \frac{1}{N_{\Omega_i}(z_i,\overline{z_i})^{\mu_i((1+\nu_i)\alpha+\lambda_0)}} \psi(\alpha,\lambda_0)(\alpha-n)_{n+\lambda_0}\frac{\|w\|^{2\lambda_0}}{\lambda_0!} \\
\nonumber &=&\frac{1}{{\prod_{i=1}^kN_{\Omega_i}(z_i,\overline{z_i})^{\mu_i(1+\nu_i)\alpha}}}
\sum_{t=0}^{+\infty}\psi(\alpha,t)\frac{(\alpha-n)_{n+t}}{t!}\|\widetilde{w}\|^{2t}\\
\nonumber &=&\frac{(\alpha-n)_n}{{\prod_{i=1}^kN_{\Omega_i}(z_i,\overline{z_i})^{\mu_i(1+\nu_i)\alpha}}}
\sum_{t=0}^{+\infty}\psi(\alpha,t)\frac{(\alpha)_t}{t!}\|\widetilde{w}\|^{2t}
\end{eqnarray}
by the following fact (see \eqref{s})
$$(\alpha-n)_{n+t}=(\alpha-n)_n(\alpha)_t.$$
 The proof is finished.

\end{proof}

Now, combining Theorem \ref{Th:2.3}, we can get the expression of  the Rawnsley's $\varepsilon$-function
for the generalized Cartan-Hartogs domain
$\big(\prod_{j=1}^k\Omega_j\big)^{{\mathbb{B}}^{d_0}}(\mu)$  with the metric $g(\mu;\nu)$.

\begin{proof}[Proof of Theorem \ref{Th:2.4}]
Using the definition \eqref{eq1.4}, we have
\begin{equation*}
   \varepsilon_{(\alpha,g(\mu;\nu))}(z_1,\ldots,z_k,w)= e^{-\alpha\Phi(z_1,\ldots,z_k,w)} K_{\alpha}(z_1,\ldots,z_k,w;\overline{z_1},\ldots,\overline{z_k},\overline{w}),
\end{equation*}
and by the definition \eqref{eq1.2}, we get
$$e^{-\alpha\Phi(z_1,\ldots,z_k,w)}={\prod_{i=1}^kN_{\Omega_i}(z_i,\overline{z_i})^{\mu_i(1+\nu_i)\alpha}}(1-\|\widetilde{w}\|^2)^{\alpha}.$$
Therefore, by \eqref{eq2.26}, we obtain \eqref{eq2.37}. The proof is finished.
\end{proof}

As a consequence of Theorem \ref{Th:2.4}, we give the proof of
Corollary \ref{Th:3.1}.

\begin{proof}[Proof of Corollary \ref{Th:3.1}]
From \eqref{eq2.37} and \eqref{eq2.38}, we obtain
\begin{equation}\label{eq2.42}
    \sum_{j=0}^dc_j(\alpha)(1-\|w\|^2)^{-(\alpha-(d-j))}=\sum_{t=0}^{+\infty}\psi(\alpha,t)\frac{(\alpha-n)_{n+t}}{t!}\|w\|^{2t}.
\end{equation}
Using
\begin{equation}\label{eq2.43}
    (1-\|w\|^2)^{-\alpha}=\sum_{t=0}^{+\infty}\frac{(\alpha)_{t}}{t!}\|w\|^{2t},
\end{equation}
we get
\begin{equation}\label{eq2.44}
    \sum_{j=0}^d\frac{c_j(\alpha)}{(\alpha-n)_{d_0+j}}(\alpha+t-d)_j=\psi(\alpha,t)(\alpha+t-d)_d.
\end{equation}
This indicates that $\psi(\alpha,t)(\alpha+t-d)_d$ is a polynomial
of $\alpha+t$. It follow $\phi(\alpha+t)=\psi(\alpha,t)(\alpha+t-d)_d$ is a polynomial of $\alpha+t$.

It is known that if $\phi(x)$ is a polynomial of $x$, then we have (refer to \cite{F})
\begin{equation*}
    \phi(x)=\sum_{j=0}^d\frac{D^j\phi(d)}{j!}(x-d)_j.
\end{equation*}
Hence the formula \eqref{eq2.44} can be written as
\begin{equation*}
    \sum_{j=0}^d\frac{c_j(\alpha)}{(\alpha-n)_{d_0+j}}(\alpha+t-d)_j=\sum_{j=0}^d\frac{D^j\phi(d)}{j!}(\alpha+t-d)_j.
\end{equation*}
Therefore, we obtain
\begin{equation}\label{eq2.45}
   c_j(\alpha)= \frac{D^j\phi(d)}{j!}(\alpha-n)_{d_0+j}.
\end{equation}
 Substituting \eqref{eq2.45} into \eqref{eq2.38}, we obtain \eqref{eq2.40}.
\end{proof}

\setcounter{equation}{0}
\section{The proof of Theorems 1.4 and 1.5}

In order to prove that a quantization procedure can be established on the generalized Cartan-Hartogs domain $\big(\big(\prod_{j=1}^k\Omega_j\big)^{{\mathbb{B}}^{d_0}}(\mu), g({\mu};\nu)\big)$. we just need to prove that the Calabi's diastasis function
$D_{ g(\mu;\nu)}$ and the Rawnsley's $\varepsilon$-function associated to the metric $ g(\mu;\nu)$ satisfies the condition $(\uppercase\expandafter{\romannumeral1})$ and $(\uppercase\expandafter{\romannumeral2})$, respectively. Before we do the proof, we will give the following results.

\begin{Lemma}\label{lemma,inequlity}
Let $\big(\prod_{j=1}^{k}\Omega_{j}\big)^{\mathbb{B}^{d_{0}}}(\mu)$ be the generalized Cartan-Hartogs domain, $\mu=(\mu_1,\ldots,\mu_k)$ and $\mu_i\in W(\Omega_{i})\backslash\{0\}$, here $W(\Omega_{i})$ are the Wallach sets of $\Omega_i$, $1\leq i\leq k$. Then we have
\begin{equation}
\sup\limits\limits_{(z,w),(\xi,\eta)\in\big(\prod\limits_{j=1}^{k}\Omega_{j}\big)^{\mathbb{B}^{d_{0}}}(\mu)}\left|1-
w{\overline{\eta}}^{\mathrm{t}}\prod
\limits_{i=1}^{k}N_{\Omega_{i}}(z_{i},\overline{\xi_{i}})^{-\mu_{i}}\right|<+\infty,
\end{equation}
where $z=(z_{1},\ldots,z_{k})$ and $\xi=(\xi_{1},\ldots,\xi_{k})$.
\end{Lemma}
\begin{proof}[Proof]
Since $\mu_i\in W(\Omega_{i})\backslash\{0\}$, we know that $\prod
_{i=1}^{k}N_{\Omega_{i}}(z_{i},
\overline{z_{i}})^{-\mu_{i}}$ is the reproducing kernel of some Hilbert space (see \cite{FK}). Hence there exists an orthonormal basis $\{g_{j}\}$ such that
$$\prod\limits_{i=1}^{k}N_{\Omega_{i}}(z_{i},
\overline{\xi_{i}})^{-\mu_{i}}=\sum\limits_{j}g_{j}(z)\overline{g_{j}(\xi)},$$
where $z=(z_{1},\ldots,z_{k})$ and $\xi=(\xi_{1},\ldots,\xi_{k})$. Then Cauchy-Schwarz inequality yields 
\begin{equation*}
\prod\limits_{i=1}^{k}\left|N_{\Omega_{i}}(z_{i},\overline{\xi_{i}})^{-\mu_{i}}\right|\leq \prod\limits_{i=1}^{k}N_{\Omega_{i}}(z_{i},
\overline{z_{i}})^{\frac{-\mu_{i}}{2}}\prod\limits_{i=1}^{k}N_{\Omega_{i}}(\xi_{i},
\overline{\xi_{i}})^{\frac{-\mu_{i}}{2}}.
\end{equation*}
Therefore, by $(z,w), (\xi,\eta)\in \big(\prod_{j=1}^{k}\Omega_{j}\big)^{\mathbb{B}^{d_{0}}}(\mu)$, we get the following inequality:
\begin{equation}\label{inequality}
\vert w{\overline{\eta}}^{\mathrm{t}}\vert\prod\limits_{i=1}^{k}\left|N_{\Omega_{i}}(z_{i},
\overline{\xi_{i}})^{-\mu_{i}}\right|\leq \Vert w\Vert\Vert \eta\Vert\prod\limits_{i=1}^{k}N_{\Omega_{i}}(z_{i},
\overline{z_{i}})^{\frac{-\mu_{i}}{2}}\prod\limits_{i=1}^{k}N_{\Omega_{i}}(\xi_{i},
\overline{\xi_{i}})^{\frac{-\mu_{i}}{2}}<1.
\end{equation}
The proof is finished.
\end{proof}

\begin{Lemma}\label{Thm.5.1}
 The noncompact K\"{a}hler manifold $\big(\big(\prod_{j=1}^{k}\Omega_{j})^{\mathbb{B}^{d_{0}}}(\mu), g({\mu};\nu)\big)$ satisfies condition $(\uppercase\expandafter{\romannumeral1})$.
\end{Lemma}

\begin{proof}[Proof]
To prove that the noncompact K\"{a}hler manifold $\big(\big(\prod_{j=1}^{k}\Omega_{j})^{\mathbb{B}^{d_{0}}}(\mu), g({\mu};\nu)\big)$ satisfies condition $(\uppercase\expandafter{\romannumeral1})$, we only need to prove that the noncompact K\"{a}hler manifold $\big(\big(\prod_{j=1}^{k}\Omega_{j})^{\mathbb{B}^{d_{0}}}(\mu),\beta g({\mu};\nu)\big)$ satisfies condition $(\uppercase\expandafter{\romannumeral1})$ for given $\beta>\max\left\{\frac{(r_{1}-1)a_{1}}{2\mu_{1}(1+\nu_1)},\ldots,\frac{(r_{k}-1)a_{k}}{2\mu_{k}(1+\nu_k)}\right\}$.

By definition of the Calabi's diastasis function of $\big(\big(\prod_{j=1}^{k}\Omega_{j}\big)^{\mathbb{B}^{d_{0}}}(\mu),\beta g({\mu};\nu)\big)$,
 we have

\begin{eqnarray}
\nonumber   & & \exp\{-D_{\beta g(\mu;\nu)}((z,w),(\xi,\eta))\} \\
\nonumber   &=& {\left|\prod\limits_{i=1}^{k} N_{\Omega_{i}}(z_{i},\overline{\xi_{i}})^{-\beta \mu_{i}(1+\nu_{i})} \left(1
-\frac{w\overline{\eta}^{t}}{\prod\limits_{i=1}^{k}N_{\Omega_{i}}(z_{i},\overline{\xi_{i}})^{\mu_{i}}}\right)^{-\beta}\right|}^{2} \\
\nonumber   & &\times \prod\limits_{i=1}^{k}N_{\Omega_{i}}(z_{i},\overline{z_{i}})^{\beta \mu_{i}(1+\nu_{i})}
\prod\limits_{i=1}^{k}N_{\Omega_{i}}(\xi_{i},\overline{\xi_{i}})^{\beta \mu_{i}(1+\nu_{i})} \\
\label{eq6.4}   & &\times \left(1-\frac{{\Vert w\Vert}^{2}}{\prod\limits_{i=1}^{k}N_{\Omega_{i}}(z_{i},\overline{z_{i}})^{\mu_{i}}}\right)^{\beta}
\left(1-\frac{{\Vert \eta\Vert}^{2}}{\prod\limits_{i=1}^{k}N_{\Omega_{i}}(\xi_{i},\overline{\xi_{i}})^{\mu_{i}}}\right)^{\beta}.
\end{eqnarray}

By (\ref{1.7}), we have
\begin{eqnarray}
\nonumber   & &\prod\limits_{i=1}^{k} N_{\Omega_{i}}(z_{i},\overline{\xi_{i}})^{-\beta \mu_{i}(1+\nu_{i})}
 \left(1-\frac{w\overline{\eta}^{t}}{\prod\limits_{i=1}^{k}N_{\Omega_{i}}(z_{i},\overline{\xi_{i}})^{\mu_{i}}}\right)^{-\beta}  \\
\nonumber   &=& \sum_{j=0}^{+\infty}\frac{\Gamma(\beta+j)}{\Gamma(\beta)j!}
\frac{(w\overline{\eta}^{t})^j}{\prod\limits_{i=1}^{k}N_{\Omega_{i}}(z_{i},\overline{\xi_{i}})^{\mu_{i}(\beta(1+\nu_i)+j)}} \\
\label{eq6.5}   &=& \sum_{j=0}^{+\infty}\sum_{\ell(\lambda^{(i)})\leq r_i\atop 1\leq i\leq k}\left(\frac{\Gamma(\beta+j)}{\Gamma(\beta)}\prod_{i=1}^k(\beta\mu_i(1+\nu_i)+\mu_i j)^{(i)}_{\lambda^{(i)}}\right)
  \left( \frac{(w\overline{\eta}^{t})^j}{j!}\prod_{i=1}^kK^{(i)}_{\lambda^{(i)}}(z_i,\overline{\xi_i})\right),
\end{eqnarray}
where
$$\frac{1}{N_{\Omega_{i}}(z_{i},\overline{\xi_{i}})^s}=\sum_{\ell(\lambda^{(i)})\leq r_i}(s)^{(i)}_{\lambda^{(i)}}K^{(i)}_{\lambda^{(i)}}(z_i,\overline{\xi_i})$$
and
$$ (s)^{(i)}_{\lambda^{(i)}}=\prod_{j=1}^{r_i}\left(s-\frac{j-1}{2}a_i\right)_{\lambda^{(i)}_j}.$$

When $\beta\mu_i(1+\nu_i)>\frac{r_i-1}{2}a_i$, $1\leq i\leq k$,  namely $\beta>\max\left\{\frac{(r_{1}-1)a_{1}}{2\mu_{1}(1+\nu_1)},\ldots,\frac{(r_{k}-1)a_{k}}{2\mu_{k}(1+\nu_k)}\right\}$, we have
$$(\beta\mu_i(1+\nu_i)+\mu_i j)^{(i)}_{\lambda^{(i)}}>0$$
for  $\forall\;j\geq 0$ and $\forall\; \lambda^{(i)}$ with $\ell(\lambda^{(i)})\leq r_i$.

Since $K^{(i)}_{\lambda^{(i)}}(z_i,\overline{\xi_i})$ and $\frac{(w\overline{\eta}^{t})^j}{j!}$ are  reproducing kernels, then
$$\left(\frac{\Gamma(\beta+j)}{\Gamma(\beta)}\prod_{i=1}^k(\beta\mu_i(1+\nu_i)+\mu_i j)^{(i)}_{\lambda^{(i)}}\right)
  \left( \frac{(w\overline{\eta}^{t})^j}{j!}\prod_{i=1}^kK^{(i)}_{\lambda^{(i)}}(z_i,\overline{\xi_i})\right)$$
are also reproducing kernels. Observe that $K^{(i)}_{\lambda^{(i)}}(z_i,\overline{\xi_i})$ is the reproducing kernels of $\mathcal{P}^{(i)}_{\lambda_i}$ where $\mathcal{P}^{(i)}_{\lambda_i}$ is the subspace of homogeneous holomorphic polynomials of degree $\vert \lambda_i\vert$, therefore, there are linearly independent holomorphic homogeneous polynomials $f_l$ ($1\leq l<+\infty$) such that
\begin{equation}\label{eq6.6}
\prod\limits_{i=1}^{k} N_{\Omega_{i}}(z_{i},\overline{\xi_{i}})^{-\beta \mu_{i}(1+\nu_{i})}
 \left(1-\frac{w\overline{\eta}^{t}}{\prod\limits_{i=1}^{k}N_{\Omega_{i}}(z_{i},\overline{\xi_{i}})^{\mu_{i}}}\right)^{-\beta} =\sum_{l=1}^{+\infty}f_l(z,w)\overline{f_{l}(\xi,\eta)}.
\end{equation}

Applying the Cauchy-Schwarz inequality to \eqref{eq6.4}, we have
$$\exp\{-D_{\beta g(\mu;\nu)}((z,w),(\xi,\eta))\}\leq1.$$
By $f(z,w)=(f_1(z,w),f_2(z,w),\ldots)\neq 0$, we get $\exp\{-D_{\beta g(\mu;\nu)}((z,w),(\xi,\eta))\}=1$ if and only if there exists a constant $c$ such that
\begin{equation}\label{eq6.7}
f_{l}(z,w)=cf_{l}(\xi,\eta),\quad \forall l\in \mathbb{N}^{+}.
\end{equation}

Taking $f_l=1$ in \eqref{eq6.7}, we obtain $c=1$.

Taking linearly independent holomorphic homogeneous polynomials $f_l$ of degree 1  in  \eqref{eq6.7}, it follows $(z,w)=(\xi,\eta)$.

Conversely, if $(z,w)=(\xi,\eta)$, it is not hard to see that
$$\exp\{-D_{\beta g(\mu;\nu)}((z,w),(\xi,\eta))\}=1.$$
So far, we complete the proof.
\end{proof}

Now we can give the proof of the Theorem  \ref{Th:1.7}.
\begin{proof}[The Proof of Theorem \ref{Th:1.7}]

By Lemma \ref{Thm.5.1}, we know that condition $(\uppercase\expandafter{\romannumeral1})$ is satisfied.

In the following, we prove the condition $(\uppercase\expandafter{\romannumeral2})$ can be also fulfilled. Firstly, by the explicit expression of $\Phi$, the function $a((z,w),\overline{(z,w)})$ admits a sesquianalytic extension on $\big(\prod\limits_{j=1}^{k}\Omega_{j}\big)^{\mathbb{B}^{d_{0}}}(\mu)\times \big(\prod\limits_{j=1}^{k}\Omega_{j}\big)^{\mathbb{B}^{d_{0}}}(\mu)$.

 We define an infinite set $E$ as follows
$$E:=\left\{\alpha\in\mathbb{{N}}:\alpha>\max\left\{n,\frac{p_1-1}{\mu_1(1+\nu_1)},\ldots,\frac{p_k-1}{\mu_k(1+\nu_k)}\right\}\right\}.$$

Let $\alpha\in E$,  by \eqref{eq2.40}, we know that
\begin{equation*}
\varepsilon_{(\alpha, g(\mu;\nu))}(z,w;\overline{\xi},\overline{\eta})=\sum_{j=0}^d\frac{D^j\phi(d)}{j!}(\alpha-n)_{d_0+j}X(z,w;\overline{\xi},\overline{\eta})^{d-j},
\end{equation*}
where
$$X(z,w;\overline{\xi},\overline{\eta})=1-\frac{{ w\overline{\eta}^{\mathrm{t}}}}{\prod\limits_{i=1}^{k}N_{\Omega_{i}}(z_{i},
\overline{\xi_{i}})^{\mu_{i}}}.$$

Since $\frac{D^{d}\phi(d)}{d!}=1$,  we have
\begin{eqnarray*}
 \varepsilon_{(\alpha, g(\mu;\nu))}(z,w;\overline{\xi},\overline{\eta})  &=& \alpha^n+ \sum_{j=1}^na_j(z,w;\overline{\xi},\overline{\eta})\alpha^{n-j} \\
   &=& \alpha^{n}+B(z,w;\overline{\xi},\overline{\eta})\alpha^{n-1}+C(\alpha,z,w;\overline{\xi},\overline{\eta})\alpha^{n-2},
\end{eqnarray*}
where

\begin{equation}\label{B(z,w)}
  B(z,w;\overline{\xi},\overline{\eta})=-\frac{(n+1)n}{2}+\frac{D^{d-1}\phi(d)}
{(d-1)!}X(z,w;\overline{\xi},\overline{\eta}),
\end{equation}

\begin{equation}\label{C(z,w)}
C(\alpha,z,w;\overline{\xi},\overline{\eta})
=\sum_{j=2}^na_j(z,w;\overline{\xi},\overline{\eta})\alpha^{2-j},
\end{equation}
here $a_j(z,w;\overline{\xi},\overline{\eta})$ are polynomials in $X(z,w;\overline{\xi},\overline{\eta})$.

By Lemma \ref{lemma,inequlity}, it easy to see that
\begin{equation*}
\sup\limits\limits_{(z,w),(\xi,\eta)\in\big(\prod\limits_{j=1}^{k}\Omega_{j}\big)^{\mathbb{B}^{d_{0}}}(\mu)}\vert a_j(z,w;\overline{\xi},\overline{\eta}) \vert<+\infty,\;2\leq j\leq n.
\end{equation*}
So, from  \eqref{B(z,w)} and \eqref{C(z,w)}, we get
\begin{equation*}
\sup\limits\limits_{(z,w),(\xi,\eta)\in\big(\prod\limits_{j=1}^{k}\Omega_{j}\big)^{\mathbb{B}^{d_{0}}}(\mu)}\vert B(z,w;\overline{\xi},\overline{\eta}) \vert<+\infty
\end{equation*}
and
\begin{equation*}
\sup\limits\limits_{(z,w),(\xi,\eta)\in\big(\prod\limits_{j=1}^{k}\Omega_{j}\big)^{\mathbb{B}^{d_{0}}}(\mu),\;\alpha\in E}\vert C(\alpha,z,w;\overline{\xi},\overline{\eta})\vert<+\infty.
\end{equation*}
Hence the condition $(\uppercase\expandafter{\romannumeral2})$ is verified. So far we complete the proof.
\end{proof}
Lastly, we give the proof of Theorem \ref{co1.5}.
\begin{proof}[The Proof of Theorem \ref{co1.5}]
By definition, the metric $\alpha g(\mu,\nu)$ is balanced if and only if $\varepsilon_{(\alpha,g(\mu;\nu))}(z,w)$ is dependent of $(z,w)$. The formula \eqref{eq2.37} implies that $\alpha g(\mu,\nu)$ is balanced iff there exist a constant $\lambda(\alpha)$ with respect to $(z,w)$ such that
$$(1-\|\widetilde{w}\|^2)^{-\alpha}
=\lambda(\alpha)\sum_{t=0}^{+\infty}\psi(\alpha,t)\frac{(\alpha)_t}{t!}\|\widetilde{w}\|^{2t}.$$
Thus, by \eqref{eq2.43}, we conclude that $\lambda(\alpha)\psi(\alpha,t)=1$, which means that $\psi(\alpha,t)$ is a constant with respect to $t$. By the expression of $\psi(\alpha,t)$, that is,
$$\psi(\alpha,t)=\frac{\prod_{i=1}^k\chi_i(\mu_i((1+\nu_i)\alpha+t)-p_i)}{\prod_{i=1}^k\mu_i^{d_i}\sum_{j=0}^d\sigma(j)(\alpha-n)_{d-j}(\alpha+t-j)_j},$$
we have $\psi(\alpha,t)$ tends to $1$ as $t\rightarrow\infty$. Hence the metrics $\alpha g(\mu,\nu)$ is balanced if and only if
$$\prod\limits_{i=1}^{k}\chi_{i}(\mu_{i}((1+\nu_{i})x+y)-p_{i})=\left(\prod\limits_{i=1}^{k}\mu_{i}^{d_{i}}\right)\sum\limits_{t=0}^{d}\left(\sum_{\sum_{i=1}^kt_i=t\atop
t_i\geq 0,1\leq i\leq k}\prod_{i=1}^k{d_i\choose t_i}\nu_i^{t_i}\right)(x-n)_{t}(x+y-d+t)_{d-t}$$
by \eqref{f1}.

 Now we turn into the rest proof of Theorem\ref{co1.5}. On one hand, by Lemma \ref{Thm.5.1}, we know that Condition $(\uppercase\expandafter{\romannumeral1})$ is satisfied. On the other hand, the formula \eqref{Blanced} implies 
 $$\varepsilon_{(\alpha, g(\mu;\nu))}(z,w;\overline{\xi},\overline{\eta})=(\alpha-n)_n=(\alpha-1)(\alpha-2)\cdots(\alpha-n).$$
 Therefore, by Berezin \cite{Berezin}, we have $\big(\big(\prod_{j=1}^k\Omega_j\big)^{{\mathbb{B}}^{d_0}}(\mu), g(\mu;\nu)\big)$ admits a Berezin quantization for $\mu_{i}\in W(\Omega_{i})\backslash\{0\}$ $(1\leq i\leq k)$. The proof is completed.
\end{proof}
\vskip 10pt

 \noindent\textbf{Acknowledgments}\quad We sincerely thank the referees, who read the paper very carefully and gave many useful suggestions. E. Bi was supported by the Natural Science Foundation of Shandong Province, China (No. ZR2018BA015), Z. Feng was supported by the Scientific Research Fund of Sichuan Provincial Education Department (No. 18ZB0272) and Z. Tu was supported by
the National Natural Science Foundation of China (No. 11671306).


\addcontentsline{toc}{section}{References}
\phantomsection
\renewcommand\refname{References}
\small{
}
\clearpage

\begin{thebibliography}{99}
\setlength{\parskip}{0pt}
\bibitem{Are-Loi} Arezzo C., Loi A.: Moment maps, scalar curvature and quantization of K¡§ahler manifolds, Comm. Math. Phys. \textbf{243}, 543-559 (2004)


\bibitem{Berezin}Berezin, F. A.: Quantization, Math. USSR Izvestiya \textbf{8}, 1109-1163 (1974)

\bibitem{Bi}Bi, E.C., Tu, Z.H.: Remarks on the canonical metrics on the Cartan¨CHartogs domains. Comptes Rendus Mathematique \textbf{355}, 760-768 (2017)

\bibitem{CGR}Cahen, M., Gutt, S., Rawnsley, J.: Quantization of K\"{a}hler manifolds. I: Geometric interpretation of Berezin's quantization.
J. Geom. Phys. \textbf{7}, 45-62 (1990)

\bibitem{calabi}Calabi, E.: Isometric embedding of complex manifold. Annals of mathematics. \textbf{58}, 1-23(1953)

\bibitem{Cat}Catlin, D.: The Bergman kernel and a theorem of Tian.
Analysis and geometry in several complex variables (Katata, 1997),
Trends Math., Birkh\"{a}user Boston, Boston, MA, pp. 1-23 (1999)



\bibitem{E0}Engli\v{s}, M.: Berezin Quantization and Reproducing Kernels on Complex Domains,
Trans. Amer. Math. Soc.  \textbf{348}, 411-479 (1996)

\bibitem{E1}Engli\v{s}, M.: A Forelli-Rudin construction and asymptotics of weighted Bergman kernels. J. Funct. Anal. \textbf{177}, 257-281 (2000)

\bibitem{E2}Engli\v{s}, M.: The asymptotics of a Laplace integral on a K\"{a}hler manifold. J. Reine Angew. Math. \textbf{528}, 1-39 (2000)


\bibitem{FK}Faraut, J., Kor\'{a}nyi, A.: Function spaces and reproducing kernels on bounded symmetric domains. J. Funct. Anal. \textbf{88}, 64-89 (1990)

\bibitem{FKKLR}Faraut, J., Kaneyuki, S., Kor\'{a}nyi, A., Lu, Q.K., Roos, G.: Analysis and Geometry on Complex Homogeneous Domains.
Progress in mathematics, Vol. 185, Birkh\"{a}user, Boston (2000)

\bibitem{Far-Tho}Faraut, J., Thomas, E.G.F.: Invariant Hilbert spaces of holomorphic functions. J. Lie Theory \textbf{9}, 383-402 (1999)

\bibitem{F}Feng, Z.M.: Hilbert spaces of holomorphic functions on generalized Cartan-Hartogs domains. Complex Variables and Elliptic Equations: An International Journal \textbf{58}(3), 431-450 (2013)

\bibitem{FT}Feng, Z.M., Tu, Z.H.: On canonical metrics on Cartan-Hartogs domains. Math. Z. \textbf{278}, 301-320 (2014)

\bibitem{FT2}Feng, Z.M., Tu, Z.H.: Balanced  metrics on some Hartogs type domains over bounded
symmetric domains. Annals of Global Analysis and Geometry \textbf{47}, 305-333 (2015)



\bibitem{Hua}Hua, L.K.:  Harmonic Analysis of Functions of Several Complex Variables in the Classical Domains.
 Amer. Math. Soc., Providence, RI (1963)





\bibitem{Ko} Kor$\acute{a}$nyi, A.: The volume of symmetric domains, the Koecher gamma function and an integral of Selberg.
Studia Sci. Math. Hungar. \textbf{17}, 129-133 (1982)




\bibitem{Loi-Mossa}Loi, A., Mossa, R.: Berezin quantization of homogeneous bounded domains. Geometriae Dedicata \textbf{161}(1), 119-128 (2012)
\bibitem{Loi-Mossa 2015}Loi, A., Mossa, R.: Some remarks on homogeneous K\"{a}hler manifolds. Geometriae dedicata \textbf{179}(1), 377-383 (2015)

\bibitem{LZ}Loi, A., Zedda, M.: Balanced metrics on Cartan and Cartan-Hartogs domains. Math. Z. \textbf{270}, 1077-1087 (2012)







\bibitem{MM07}Ma, X. and  Marinescu, G.: Holomorphic Morse
inequalities and Bergman kernels. Progress in Mathematics, Vol. 254,
Birkh\v{a}user Boston Inc., Boston, MA (2007)

\bibitem{MM08}Ma, X.
and  Marinescu, G.: Generalized Bergman kernels on symplectic
manifolds. Adv. Math. \textbf{217}(4), 1756-1815 (2008)

\bibitem{MM12}Ma, X. and Marinescu, G.: Berezin-Toeplitz
quantization on K\"{a}hler manifolds. J. reine angew. Math.
\textbf{662}, 1-56 (2012)





\bibitem{TW}Tu, Z.H., Wang, L.: Rigidity of proper holomorphic mappings between equidimensional Hua domains.
 Math. Ann. \textbf{363}, 1-34 (2015)

\bibitem{WH}Wang A., Hao Y.: The explicit solutions for a class of complex Monge-Amp\`{e}re equations. Nonlinear Analysis: Theory, Methods $\&$ Applications \textbf{95}, 639-649 (2014)






\bibitem{YLR}Yin, W.P., Lu, K.P., Roos, G.: New classes of domains with explicit Bergman kernel. Science in China, Series A \textbf{47}, 352-371 (2004)


\bibitem{Zed}Zedda, M.: Canonical metrics on Cartan-Hartogs domains. International Journal of Geometric Methods in Modern Physics \textbf{9}(1), 1250011 (13 pages) (2012)

 \bibitem{Zedda}
Zedda, M. Berezin-Engli\u{s}' quantization of Cartan-Hartogs domain. J. Geom. Phys. \textbf{100}, 62-67 (2016)

\bibitem{Zeld}Zelditch, S.: Szeg\"{o} kernels and a theorem of Tian. Internat. Math. Res. Notices \textbf{6}, 317-331 (1998)


\end{thebibliography}
\end{document}